\newcommand\lo[1]{\backslash #1}
\def\ddefloop#1{\ifx\ddefloop#1\else\ddef{#1}\expandafter\ddefloop\fi}
\def\ddef#1{\expandafter\def\csname bf#1\endcsname{\ensuremath{\mathbf{#1}}}}
\def\ddef#1{\expandafter\def\csname bf#1\endcsname{\ensuremath{\pmb{\csname #1\endcsname}}}}
\def\ddef#1{\expandafter\def\csname bb#1\endcsname{\ensuremath{\mathbb{#1}}}}
\def\ddef#1{\expandafter\def\csname c#1\endcsname{\ensuremath{\mathcal{#1}}}}
\def\ddef#1{\expandafter\def\csname v#1\endcsname{\ensuremath{\boldsymbol{#1}}}}
\def\ddef#1{\expandafter\def\csname bv#1\endcsname{\ensuremath{\bar{\boldsymbol{#1}}}}}
\def\ddef#1{\expandafter\def\csname hv#1\endcsname{\ensuremath{\hat{\boldsymbol{#1}}}}}
\def\ddef#1{\expandafter\def\csname tv#1\endcsname{\ensuremath{\tilde{\boldsymbol{#1}}}}}
\def\ddef#1{\expandafter\def\csname v#1\endcsname{\ensuremath{\boldsymbol{\csname #1\endcsname}}}}
\newcommand\T{{\scriptscriptstyle{\mathsf{T}}}}
\renewcommand\v{\ensuremath{\boldsymbol}}
\newtheorem{theorem}{Theorem}
\newtheorem{lemma}{Lemma}
\newcommand\diag[2][0]{\operatorname{diag}\del[#1]{#2}}
\newcommand\tr[2][0]{\operatorname{tr}\del[#1]{#2}}
\newcommand\compl{{\operatorname{c}}}
\newcommand\independent{\protect\mathpalette{\protect\independenT}{\perp}}
\def\independenT#1#2{\mathrel{\rlap{$#1#2$}\mkern2mu{#1#2}}}
\newcommand\Normal{{\operatorname{N}}}
\newcommand\op{{\operatorname{op}}}
\newcommand\eigmin{{\operatorname{\lambda_{\min}}}}
\newcommand\ith{$i^{\text{th}}$\xspace}
\newcommand\jth{$j^{\text{th}}$\xspace}
\newcommand\ind[1]{\mathds{1}_{\{#1\}}}
\title{On the proliferation of support vectors in high dimensions}
\author[1,2]{Daniel Hsu}
\author[3,4]{Vidya Muthukumar}
\author[1]{Ji Xu}
\affil[1]{Department of Computer Science, Columbia University}
\affil[2]{Data Science Institute, Columbia University}
\affil[3]{School of Electrical and Computer Engineering, Georgia Institute of Technology}
\affil[4]{School of Industrial and Systems Engineering, Georgia Institute of Technology}
\begin{document}
\maketitle

\begin{abstract}

The support vector machine (SVM) is a well-established classification method whose name refers to the particular training examples, called support vectors, that determine the maximum margin separating hyperplane.
The SVM classifier is known to enjoy good generalization properties when the number of support vectors is small compared to the number of training examples.
However, recent research has shown that in sufficiently high-dimensional linear classification problems, the SVM can generalize well despite a \emph{proliferation of support vectors} where all training examples are support vectors.
In this paper, we identify new deterministic equivalences for this phenomenon of support vector proliferation, and use them to (1) substantially broaden the conditions under which the phenomenon occurs in high-dimensional settings, and (2) prove a nearly matching converse result.

\end{abstract}

\section{Introduction}
\label{sec:intro}

The Support Vector Machine (SVM) is one of the most well-known and commonly used methods for binary classification in machine learning~\citep{vapnik1982estimation,cortes1995support}.
Its homogeneous version in the linearly separable setting (commonly also known as the \emph{hard-margin SVM}) is defined as the solution to an optimization problem characterizing the linear classifier (a separating hyperplane) that maximizes the minimum margin achieved on the $n$ training examples $(\vx_1,y_1),\dotsc,(\vx_n,y_n) \in \bbR^d \times \{-1,+1\}$:
\begin{align}
  \max_{\vw \in \bbR^d, \gamma \geq 0} \quad & \gamma
  \qquad \text{subject\ to} \quad \operatorname{margin}_i(\vw) \geq \gamma \quad \text{for all $i=1,\dotsc,n$} ,
  \label{eq:svm}
\end{align}
where
\begin{align}
  \operatorname{margin}_i(\vw)
  & := \begin{cases}
    y_i\vx_i^\T\vw/\|\vw\|_2 & \text{if $\vw \neq \v0$} \\
    0 & \text{if $\vw = \v0$}
  \end{cases}
\end{align}
is the margin achieved by $\vw$ on the \ith training example\footnote{We only consider homogeneous linear classifiers in this paper and hence have omitted the bias term. The equivalent, but more standard, form of this problem is presented as \Cref{eq:svm-primal} in \Cref{sec:svm-dual}.} $(\vx_i,y_i)$.
The SVM gets its name from the fact that the solution $(\vw^\star,\gamma^\star)$ depends only on the set of training examples that achieve the minimum margin value, $\gamma^\star$.
These examples are known as the ``support vectors'', and it is well-known that the weight vector $\vw^\star$ can be written as a (non-negative) linear combination of the $y_i\vx_i$ corresponding to support vectors.
More precisely, the dual form of the solution expresses the weight vector $\vw^\star = \sum_{i=1}^n \alpha_i^\star y_i\vx_i$ in terms of dual variables $\alpha_1^\star,\dotsc,\alpha_n^\star \geq 0$.
This constitutes a concise representation of the solution---just the list of non-zero dual variables $\alpha_i^\star$ and corresponding data points.
This remarkable property of the SVM is particularly important in its ``kernelized'' extension~\citep{boser1992training,smola2002learning}, where the dimension $d$ may be very large (or, in fact, infinite) but inner products can be computed efficiently.

The number of support vectors, if sufficiently small, has interesting consequences for the generalization error of the hard-margin SVM solution.
Techniques based on leave-one-out analysis and sample compression~\cite{vapnik1995nature,graepel2005pac,germain2011pac} bound the generalization error as a linear function of the fraction of support vectors and have no explicit dependence on the dimension $d$.
In particular, if the number of support vectors can be shown to be $o(n)$ with high probability, these bounds imply ``good generalization" of the SVM solution in the sense that the generalization error of the SVM is upper-bounded by a quantity that tends to zero as $n \to \infty$. %
Moreover, this sparsity in support vectors can be demonstrated in sufficiently low-dimensional settings using asymptotic arguments~\cite{dietrich1999statistical,buhot2001robust,malzahn2005statistical}.
However, the story is starkly different in the modern high-dimensional (also called \emph{overparameterized}) regime; in fact, quite the opposite can happen.
Recent work comparing classification and regression tasks under the high-dimensional linear model~\citep{muthukumar2020classification} showed that under sufficient ``effective overparameterization", e.g., $d \sim n \log n$ under isotropic Gaussian design, \emph{every training example is a support vector with high probability}.
That is, the fraction of support vectors is exactly $1$ with high probability.
This establishes a remarkable link between the SVM and solutions that interpolate training data, allowing an entirely different set of recently developed techniques that analyze interpolating solutions in regression tasks~\cite{belkin2019two,bartlett2020benign,hastie2019surprises,mei2019generalization,mitra2019understanding,muthukumar2020harmless} to be applied to the SVM.
Using this equivalence,~\citet{muthukumar2020classification} showed the existence of intermediate levels of overparameterization in which all training examples are support vectors with high probability, but the ensuing SVM solution still generalizes well.
This characterization was derived for a specific overparameterized ensemble inspired by spiked covariance models~\cite{wang2017asymptotics,mahdaviyeh2019risk}.
More importantly, the level of overparameterization considered there was only sufficiently, not necessarily, high enough for support vector proliferation.

In this paper, we establish necessary and sufficient conditions for the phenomenon of support vector proliferation to occur with high probability for a range of high-dimensional linear ensembles, including sub-Gaussian and Haar design of the covariate matrix.
In other words, for sufficiently high \emph{effective overparameterization} (measured through quantities that are related to effective ranks of the covariance matrix as identified by~\citet{bartlett2020benign}), we show that all training examples are support vectors with high probability.
We also provide a weak converse: in the absence of a certain level of overparameterization, at least one training example is not a support vector with constant probability.

\subsection*{Related work}
The number of support vectors has been previously studied in several contexts on account of the aforementioned connection to generalization error both in classical regimes using sample compression bounds~\cite{vapnik1995nature,graepel2005pac,germain2011pac}, and the modern high-dimensional regime~\cite{muthukumar2020classification,chatterji2020finite}.
Several works investigate the thermodynamic limit where both the dimension of the input data $d$ and the number of training data $n$ both tend to infinity at a fixed ratio $\delta = n/d$~\citep[e.g.,][]{dietrich1999statistical,buhot2001robust,malzahn2005statistical,liu2019exact}.
One particular result of note is that of \citet{buhot2001robust}, who consider a linearly\footnote{We note that the main interest of \citeauthor{buhot2001robust} is in SVMs with non-linear feature maps; we quote one of their results specialized to the linear setting.} separable setting where the training data inputs are drawn iid from a $d$-dimensional \emph{isotropic} normal distribution.
They find that the typical fraction of training examples that are support vectors approaches the following (in the limit as both $n,d \to \infty$):
\begin{equation}
  \begin{cases}
    \frac{0.952}{\delta} & \text{for $\delta \gg 1$} , \\
    1 - \sqrt{\frac{2\delta}{\pi}} \exp\del{-\frac{1}{2\delta}}
                         & \text{for $\delta \ll 1$} .
  \end{cases}
\end{equation}
In the classical regime, where $n \gg d$ (i.e., $\delta \gg 1$), a combination of this asymptotic estimate with sample compression arguments yields generalization error bounds of order $O(1/\delta) = O(d/n)$, which tend to zero as $\delta \to \infty$.
However, in the high-dimensional regime, where $d \gg n$ (i.e., $\delta \ll 1$), the fraction of examples that are support vectors quickly approaches $1$ as $\delta \to 0$.
In these cases, the generalization error bounds based on support vectors no longer provide non-trivial guarantees.

\citet{muthukumar2020classification} recently provided a non-asymptotic result for this isotropic case considered above.
They found that if $d$ grows somewhat faster than $n$ (specifically, $d \sim n \log n$), then the fraction of examples that are support vectors is $1$ with very high probability.
They also showed that the fraction of support vectors obtained by the hard-margin linear SVM can tend to $1$ in anisotropic settings \emph{if} the setting is sufficiently high-dimensional; this is captured by notions of effective rank of the covariance matrix of the linear featurizations~\cite{bartlett2020benign}.
Our results greatly sharpen the \emph{sufficient} conditions provided there; see \Cref{sec:results} for a detailed comparison, and in particular, \Cref{sec:discussion} for additional discussion of implications for generalization error bounds.

\citet{chatterji2020finite} also recently showed that the SVM can generalize well in overparameterized regimes.
In their work, the data are generated by a linear model inspired by Fisher's linear discriminant analysis, and establish their results under the assumption of sufficiently high separation between the means of the two classes.
Their results are based on a direct analysis of the SVM, but do not make any claims about the number of support vectors.

The number of support vectors has also been studied in non-separable but low-dimensional settings, using suitable variants of the SVM optimization problem.
These variants include the \emph{soft-margin SVM}~\citep{cortes1995support} and the \emph{$\nu$-SVM}~\citep{scholkopf2000new}.
In both of these, the hard-margin constraint is relaxed and support vectors include training examples that are exactly on the margin as well as \emph{margin violations}.
The soft-margin SVM doees this by introducing slack variables in the margin constraints on examples, and uses a hyper-parameter to control the trade-off between the margin maximization objective and the sum of constraint violations.
The $\nu$-SVM provides somewhat more direct control on the number of support vectors: the hyper-parameter $\nu$ is an upper-bound on the fraction of margin violations and a lower-bound on the fraction of all support vector examples.
First, for a suitable choice of the hyper-parameter, the fraction of examples that are support vectors in the soft-margin SVM can be related to the Bayes error rate when certain kernel functions are used~\citep{steinwart2003sparseness,bartlett2007sparseness}.
Indeed, this fact has motivated algorithmic developments for sparsifying the SVM solution~\citep[e.g.,][]{burges1996simplified,downs2001exact,keerthi2006building}.
Second, under some general conditions on the data distribution, it is also shown for the $\nu$-SVM~\citep[Proposition 5]{scholkopf2000new} that as $n\to\infty$ for a fixed dimension $d$, all support vectors are of the margin violation category.
These results for non-separable but low-dimensional settings are not directly comparable to ours, which hold in the high-dimensional (therefore, typically separable) regime.
Notably, our results on the support vector proliferation do not require the presence of label noise---i.e., the Bayes error rate can be zero and still, every example may be a support vector.

In addition to the aforementioned sample compression bounds that explicitly use the number of support vectors, there is a distinct line of work on generalization error of SVMs based on the margin $\gamma$ achieved on the training examples~\citep{bartlett1999generalization,zhang2002covering,bartlett2002rademacher,mcallester2003simplified,gronlund2020near}.
However, in the settings we consider, these generalization error bounds are never smaller than a universal constant (e.g., $1/\sqrt2$), as pointed out by \citet[Section 6]{muthukumar2020classification} and expanded upon in \Cref{sec:discussion}.
It is worth mentioning that the margin-based bounds, as well as the bounds based on the number of support vectors, make no (or very few) assumptions about the distribution of the training examples. 
The distribution-free quality makes the bounds widely applicable, but it also limits their ability to capture certain generalization phenomena, such as those from \citep{muthukumar2020classification,chatterji2020finite}.

Our work bears some resemblance to the early work of \citet{cover1965geometrical} on linear classification.
There, the concern is the number of independent features necessary and sufficient for a data set (with fixed, non-random labels) to become linear separable.
Linear separability just requires the existence of $\vw \in \bbR^d$ such that $\operatorname{margin}_i(\vw)>0$ for all $i=1,\dotsc,n$, but these margin values could vary across examples.
In contrast, our work considers necessary and sufficient conditions under which the margins achieved are all the same maximum (positive) value.

There have been several developments on support vector proliferation since the initial publication of our work.
First, \citet{ardeshir2021support} strengthened our converse result under the independent features model.
For isotropic features, they show that $d = \Omega(n \log n)$ is necessary for a constant probability of support vector proliferation.
They also provide a converse result for anisotropic features in terms of effective dimensions.
In the special case of standard Gaussian features, they find that the transition occurs around $d \sim 2 n \log n$, and they bound the width of the transition.
\citeauthor{ardeshir2021support} also give empirical evidence for the universality of support vector proliferation under broader classes of feature distributions.
Independently,~\citet{wang2020benign} showed that support vector proliferation also occurs under sufficient effective overparameterization under the Gaussian mixture model;~\citet{cao2021risk} further sharpened and generalized these results to general sub-Gaussian mixture models.
Finally,~\citet{wang2021benign} considered the multiclass case and showed a high-probability equivalence between not only the one-vs-all SVM~\cite{rifkin2004defense} and interpolation (which follows as a direct consequence of this work), but also the multiclass SVM~\cite{weston1998multi}.
While the dual of the multiclass SVM required a different and novel treatment, subsequent steps in their proof leverage, in part, the arguments that are provided in this paper.

\section{Setting}
\label{sec:setting}

In this section, we introduce notation for the SVM problem, and describe the probabilistic models of the training data under which we conduct our analysis.

\subsection{SVM optimization problem}
\label{sec:svm-dual}

Our analysis considers the standard setting for homogeneous binary linear classification with SVMs.
In this setting, one has $n$ training examples $(\vx_1,y_1), \dotsc, (\vx_n,y_n) \in \bbR^d \times \{-1,+1\}$.
A homogeneous linear classifier is specified by a weight vector $\vw \in \bbR^d$, so that the prediction of this classifier on $\vx \in \bbR^d$ is given by the sign of $\vx^\T\vw$.
The ambiguity of the sign when $\vx^\T\vw = 0$ is not important in our analysis.

The SVM optimization problem from \Cref{eq:svm} is more commonly written as
\begin{equation}
  \begin{aligned}
    \min_{\vw \in \bbR^d} \quad & \frac12 \|\vw\|_2^2 \\
    \text{subj.\ to} \quad & y_i\vx_i^\T\vw \geq 1 \quad \text{for all $i=1,\dotsc,n$} .
  \end{aligned}
  \label{eq:svm-primal}
\end{equation}
The well-known Lagrangian dual of \Cref{eq:svm-primal} can be written entirely in terms of the vector of labels $\vy := (y_1,\dotsc,y_n) \in \bbR^n$ and the $n \times n$ Gram (or kernel) matrix $\vK$ corresponding to $\vx_1,\dotsc,\vx_n$, i.e., $K_{i,j} := \vx_i^\T\vx_j$ for all $1 \leq i, j \leq n$:
\begin{equation}
  \begin{aligned}
    \max_{\valpha \in \bbR^n} \quad & \sum_{i=1}^n \alpha_i - \frac12 \valpha^\T \diag{\vy}^\T \vK \diag{\vy} \valpha \\
    \text{subj.\ to} \quad & \alpha_i \geq 0 \quad \text{for all $i=1,\dotsc,n$} .
  \end{aligned}
  \label{eq:svm-dual}
\end{equation}
Above, we use $\diag{\cdot}$ to denote the diagonal matrix with diagonal entries taken from the vector-valued argument.
An optimal solution $\valpha^\star$ to the dual problem in \Cref{eq:svm-dual} corresponds to an optimal primal variable $\vw^\star$ for the problem in \Cref{eq:svm-primal} via the relation $\vw^\star = \sum_{i=1}^n \alpha_i^\star y_i\vx_i$.
The support vectors are precisely the examples $(\vx_i,y_i)$ for which the corresponding $\alpha_i^\star$ is positive, a consequence of complementary slackness.

It will be notationally convenient to change the optimization variable from $\valpha$ to $\vbeta \in \bbR^n$ with $\beta_i = y_i\alpha_i$ for all $i=1,\dotsc,n$.
In terms of $\vbeta$, the SVM dual problem from \Cref{eq:svm-dual} becomes
\begin{equation}
  \begin{aligned}
    \max_{\vbeta \in \bbR^n} \quad & \vy^\T \vbeta - \frac12 \vbeta^\T \vK \vbeta \\
    \text{subj.\ to} \quad & y_i\beta_i \geq 0 \quad \text{for all $i=1,\dotsc,n$} .
  \end{aligned}
  \label{eq:svm-dual2}
\end{equation}
An optimal solution $\vbeta^\star$ to this problem corresponds to an optimal primal variable $\vw^\star$ via the relation $\vw^\star = \sum_{i=1}^n \beta_i^\star \vx_i$, and the support vectors are precisely the examples $(\vx_i,y_i)$ for which $\beta_i^\star$ is non-zero.

Note that if it were not for the $n$ constraints, the solutions to optimization problem would be characterized by the linear equation $\vK\vbeta = \vy$.
We refer to the version of the optimization problem in \Cref{eq:svm-dual2} without the $n$ constraints as the \emph{ridgeless regression problem}.
Solutions to this problem have been extensively studied in recent years~\citep[e.g.,][]{liang2020just,bartlett2020benign,muthukumar2020harmless,belkin2019two,hastie2019surprises,mahdaviyeh2019risk}.
If a vector $\vbeta \in \bbR^n$ satisfies both $\vK\vbeta = \vy$ as well as the $n$ constraints $y_i\beta_i \geq 0$ for all $i=1,\dotsc,n$, then $\vbeta$ is necessarily an optimal solution to the SVM dual problem from \Cref{eq:svm-dual2}.

\subsection{Data model}
\label{sec:data}

We analyze the SVM under the following probabilistic model of the training examples.

\paragraph{Feature model.}
The $\vx_1,\dotsc,\vx_n$ are random vectors in $\bbR^d$ satisfying
\begin{align}
  \vx_i & := \diag{\vlambda}^{1/2} \vz_i , \quad \text{for all $i=1,\dotsc,n$} ,
\end{align}
The positive vector $\vlambda \in \bbR_{++}^d$ parameterizes the model.
The random vectors, collected in the $n \times d$ random matrix $\vZ := [ \vz_1 | \dotsb | \vz_n ]^\T = (z_{i,j})_{1 \leq i \leq n; 1 \leq j \leq d}$, satisfy one of the following distributional assumptions.
\begin{enumerate}
  \item \emph{Independent features:} $\vZ$ has independent entries such that each $z_{i,j}$ is mean-zero, unit variance, and sub-Gaussian with parameter $v>0$ (i.e., $\bbE(z_{i,j}) = 0$, $\bbE(z_{i,j}^2) = 1$, and $\bbE(e^{tz_{i,j}}) \leq e^{vt^2/2}$ for all $t \in \bbR$).

  \item \emph{Haar features:} $\vZ$ is taken to be the first $n$ rows of a uniformly random $d \times d$ orthogonal matrix (with the Haar measure), and then scaled by $\sqrt{d}$.
  The scaling is immaterial to our results, but it makes the analysis comparable to that for the independent features case.
\end{enumerate}

\paragraph{Label model.}
Conditional on $\vx_1,\dotsc,\vx_n$, the $y_1,\dotsc,y_n$ are independent $\{-1,+1\}$-valued random variables such that the conditional distribution of $y_i$ depends only on $\vx_i$ for each $i=1,\dotsc,n$.
Formally:
\begin{equation}
  y_i \independent (\vx_1,y_1,\dotsc,\vx_{i-1},y_{i-1},\vx_{i+1},y_{i+1},\dotsc,\vx_n,y_n) \mid \vx_i .
\end{equation}

\paragraph{Remarks.}
All of our results will assume $d \geq n$.
The non-singularity of the kernel matrix $\vK = \vZ \diag{\vlambda} \vZ^\T$ will be important for our analysis.
In the case of Haar features, setting $d \geq n$ ensures that the matrix $\vZ$ always has rank $n$, and hence the kernel matrix $\vK = \vZ \diag{\vlambda} \vZ^\T$ is always non-singular.
In the case of independent features, if the distributions of the $z_{i,j}$ are continuous, then $\vZ$ has rank $n$ almost surely, and hence again $\vK$ is non-singular almost surely.
Our results only require the $z_{i,j}$ to be sub-Gaussian and need not have continuous distributions.
For instance, if the $z_{i,j}$ are Rademacher (uniform on $\{-1,+1\}$), then there is a non-zero probability that $\vZ$ is rank-deficient---however, we will see that this probability is negligible.

Our label model is very general and allows for a variety of settings, including the following.
\begin{enumerate}
  \item \emph{Generalized linear models (GLMs):} $\Pr(y_i = 1 \mid \vx_i) = g(\vx_i^\T\vw)$ for some $\vw \in \bbR^d$ and some function $g \colon \bbR \to [0,1]$.
    Examples include \emph{logistic regression}, where $g(t) = 1/(1+e^{-t})$; \emph{probit regression}, where $g(t) = \Phi(t)$ and $\Phi$ is the cumulative distribution function of the standard Gaussian distribution; and \emph{one-bit compressive sensing}~\citep{boufounos2008one}, where $g(t) = \ind{t>0}$.

  \item \emph{Multi-index models:} $\Pr(y_i = 1 \mid \vx_i) = h(\vW\vx_i)$ for some $k \in \bbN$, $\vW \in \bbR^{k \times d}$, and $h \colon \bbR^k \to [0,1]$.
   The case $k=1$ corresponds to GLMs.
   Examples with $k\geq 2$ include the intersections of half-spaces models and certain neural networks~\citep{baum1990polynomial,klivans2004learning,klivans2008learning}.

  \item \emph{Fixed labels:} $y_i \in \{-1,+1\}$ are fixed (non-random) values.
  This can be regarded as a null model where the feature vectors have no statistical relationship to the labels.
  This null model was, e.g., considered by~\citet{cover1965geometrical}.

\end{enumerate}

Our results in \Cref{thm:independent} and \Cref{thm:haar} consider, respectively, the independent features and Haar features, but both allowing for general label models.
Our weak converse result in \Cref{thm:converse} is established in the special case where the $z_{i,j}$ are iid standard Gaussian random variables (a special case of independent features), and where the labels are fixed.

\subsection{Additional notation}

Let $[n] := \{1,\dotsc,n\}$ for any natural number $n$.
Let $\bbR_{++} := \{ x \in \bbR \colon x > 0 \}$ denote the positive real numbers.
For a vector $\vv \in \bbR^n$, we let $\vv_{\lo i} \in \bbR^{n-1}$ denote the vector obtained from $\vv$ by omitting the \ith coordinate.
For a matrix $\vM \in \bbR^{n \times d}$, we let $\vM_{\lo i} \in \bbR^{(n-1) \times d}$ denote the matrix obtained from $\vM$ by omitting the \ith row.
Sometimes, for a square matrix $\vM \in \bbR^{n \times n}$, we will also use $\vM_{\lo i} \in \bbR^{(n-1) \times (n-1)}$ to denote the matrix obtained from $\vM$ by removing the \ith row and column.
We let $\ve_i$ denote the \ith coordinate vector in $\bbR^n$.
For a vector $\vv \in \bbR^d$, we denote its $p$-norm by $\|\vv\|_p = (\sum_{i=1}^d |v_i|^p)^{1/p}$.
For a matrix $\vM \in \bbR^{d \times d}$, we denote its $2\to2$ operator norm (i.e., largest singular value) by $\|\vM\|_{\op} = \sup_{\vv \in \bbR^d : \|\vv\|_2 \leq 1} \|\vM\vv\|_2$.
Let $S^{d-1} := \{ \vx \in \bbR^d : \|\vx\|_2 = 1 \}$ denote the unit sphere in $\bbR^d$.
If $\vM$ is a symmetric matrix, $\eigmin(\vM)$ denotes the smallest eigenvalue of $\vM$.
Finally, we will use $(C,c,c_1,c_2)$ to denote universal constants that do not depend, explicitly or implicitly, on the dimension $d$, the number of training examples $n$, or properties of the data distribution.

\section{Main results}
\label{sec:results}

Our primary interest is in the probability that every training example is a support vector under the data model from \Cref{sec:data}.
We give sufficient conditions on certain effective dimensions for this probability to tend to one as $n \to \infty$.
We complement these results with a partial weak converse.
Finally, we present a key deterministic result that is used in the proofs of the aforementioned results.
All proofs are given in \Cref{sec:proofs}.

We define the following effective dimensions in terms of the data model parameter $\vlambda$:
\begin{equation}
  d_2 := \frac{\|\vlambda\|_1^2}{\|\vlambda\|_2^2}
  \qquad \text{and} \qquad
  d_\infty := \frac{\|\vlambda\|_1}{\|\vlambda\|_\infty} .
\end{equation}
Observe that $d \geq d_2 \geq d_\infty$, and that if $\lambda_j = 1$ for all $j=1,\dotsc,d$ (i.e., the isotropic setting), then $d = d_2 = d_\infty$.
We note that $d_2$ and $d_\infty$ are, respectively, the same as the effective ranks $r_0(\diag{\vlambda})$ and $R_0(\diag{\vlambda})$ studied by \citet{bartlett2020benign}.
They arise naturally from the tail behavior of certain linear combinations of $\chi^2$-random variables~\citep[see, e.g.,][]{laurent2000adaptive}.

\subsection{Sufficient conditions}

Our first main result provides sufficient conditions on the effective dimensions $d_2$ and $d_\infty$ in the independent features setting so that, with probability tending to one, every training example is a support vector.

\begin{theorem}
  \label{thm:independent}
  There are universal constants $C>0$ and $c>0$ such that the following holds.
  If the training data $(\vx_1,y_1),\dotsc,(\vx_n,y_n)$ follow the model from \Cref{sec:data} with independent features, subgaussian parameter $v>0$, and model parameter $\vlambda \in \bbR_{++}^d$, then the probability that every training example is a support vector is at least
  \begin{equation}
    1 - \exp\del{ - c \cdot \min\cbr{ \frac{d_2}{v^2} ,\, \frac{d_\infty}{v} } + Cn } - \exp\del{ - c \cdot \frac{d_\infty}{vn} + C\log n } .
  \end{equation}
\end{theorem}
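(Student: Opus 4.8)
The plan is to reduce support vector proliferation to a per-example geometric condition and then control it uniformly via concentration. By complementary slackness and the discussion in \Cref{sec:svm-dual}, every training example is a support vector precisely when the (unique) solution $\vbeta^\star$ of the ridgeless regression problem $\vK\vbeta = \vy$ already satisfies the sign constraints $y_i\beta_i^\star > 0$ for all $i$; equivalently, $y_i (\vK^{-1}\vy)_i > 0$ for all $i$. I would first record the standard leave-one-out / Schur-complement identity: writing $\vK$ in block form after isolating row/column $i$, one gets
\begin{equation*}
  (\vK^{-1}\vy)_i = \frac{y_i - \vK_{i,\lo i}^\T \vK_{\lo i}^{-1} \vy_{\lo i}}{K_{i,i} - \vK_{i,\lo i}^\T \vK_{\lo i}^{-1} \vK_{i,\lo i}} .
\end{equation*}
The denominator is a Schur complement of a positive definite matrix, hence positive, so the condition $y_i(\vK^{-1}\vy)_i > 0$ is equivalent to $y_i(y_i - \vK_{i,\lo i}^\T\vK_{\lo i}^{-1}\vy_{\lo i}) > 0$, i.e. $1 > y_i\, \vK_{i,\lo i}^\T\vK_{\lo i}^{-1}\vy_{\lo i}$. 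Since $\vK_{i,\lo i} = \diag{\vlambda}^{1/2}\vZ_{\lo i}$ applied to $\vx_i$, this is a bound on a bilinear form in $\vx_i$ (equivalently $\vz_i$) and data-independent-of-$i$ quantities. This is exactly the kind of ``one example vs.\ the rest'' reduction that the deterministic equivalence alluded to in \Cref{sec:results} should furnish, so I would invoke that deterministic lemma to phrase the event cleanly.

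Next I would condition on $(\vx_j, y_j)_{j \neq i}$ and on $y_i$, treating $\vz_i$ as the only randomness. The quantity $y_i \vK_{i,\lo i}^\T \vK_{\lo i}^{-1}\vy_{\lo i} = \langle \vz_i,\, \diag{\vlambda}^{1/2}\vZ_{\lo i}^\T \vK_{\lo i}^{-1} y_i\vy_{\lo i}\rangle$ is a linear functional of the mean-zero, sub-Gaussian vector $\vz_i$, so by the sub-Gaussian tail bound it concentrates around $0$ with fluctuations governed by $\|\diag{\vlambda}^{1/2}\vZ_{\lo i}^\T\vK_{\lo i}^{-1}\vy_{\lo i}\|_2^2$. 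Expanding, this squared norm equals $\vy_{\lo i}^\T \vK_{\lo i}^{-1} \vZ_{\lo i}\diag{\vlambda}\vZ_{\lo i}^\T \vK_{\lo i}^{-1}\vy_{\lo i} = \vy_{\lo i}^\T\vK_{\lo i}^{-1}\vy_{\lo i}$, which is at most $\|\vy_{\lo i}\|_2^2/\eigmin(\vK_{\lo i}) = (n-1)/\eigmin(\vK_{\lo i})$. Thus, on the event that $\eigmin(\vK_{\lo i})$ is large — say $\gtrsim n v / d_\infty$ up to the right scaling — a single sub-Gaussian tail bound gives $\Pr(y_i\vK_{i,\lo i}^\T\vK_{\lo i}^{-1}\vy_{\lo i} \geq 1 \mid \text{rest}) \leq \exp(-c\, d_\infty/(vn))$, and a union bound over $i \in [n]$ produces the second term $\exp(-c\,d_\infty/(vn) + C\log n)$ in the stated bound.

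It remains to produce the high-probability lower bound on $\eigmin(\vK_{\lo i})$ uniformly over $i$, and this is where the first term, $\exp(-c\min\{d_2/v^2, d_\infty/v\} + Cn)$, comes from and where the main work lies. The matrix $\vK_{\lo i} = \vZ_{\lo i}\diag{\vlambda}\vZ_{\lo i}^\T$ is an $(n-1)\times(n-1)$ Gram matrix of sub-Gaussian rows with covariance $\diag{\vlambda}$; its diagonal entries concentrate at $\|\vlambda\|_1$ and its off-diagonal entries are of order $\|\vlambda\|_2$, so heuristically $\vK_{\lo i} \approx \|\vlambda\|_1 \vI + (\text{noise of operator norm} \sim \|\vlambda\|_2\sqrt n + \ldots)$, giving $\eigmin \gtrsim \|\vlambda\|_1$ once $d_2 = \|\vlambda\|_1^2/\|\vlambda\|_2^2 \gtrsim n$ (and similarly a $d_\infty$ condition controls the heavier-tailed deviations). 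Making this rigorous is the crux: I would decompose $\vK_{\lo i} - \|\vlambda\|_1\vI$ into its diagonal part (handled by Bernstein/Hanson–Wright concentration of $\sum_j \lambda_j(z_{k,j}^2 - 1)$, with tail $\exp(-c\min\{t^2/(v^2\|\vlambda\|_2^2), t/(v\|\vlambda\|_\infty)\})$ — this is precisely the $\chi^2$-type tail cited after the definition of $d_2, d_\infty$) and its off-diagonal part (an $(n-1)$-dimensional quadratic form / operator-norm bound via an $\varepsilon$-net over $S^{n-2}$, each net point again a Hanson–Wright-controlled quadratic form, costing an $\exp(Cn)$ union-bound factor over the net). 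Choosing the threshold $t \asymp \|\vlambda\|_1$ turns the Bernstein exponents into $\min\{d_2/v^2, d_\infty/v\}$, and the net union bound contributes the $+Cn$; a final union bound over $i\in[n]$ is absorbed into $Cn$. In the Rademacher case one also notes the rank-deficiency event is contained in the complement of the $\eigmin$ event and hence already accounted for. The main obstacle, then, is the uniform (over the net and over $i$) control of the smallest eigenvalue of the leave-one-out Gram matrices with the sharp $d_2$ and $d_\infty$ dependence; once that spectral estimate is in hand, the rest is the short conditional sub-Gaussian argument above.
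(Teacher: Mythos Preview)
Your overall plan is essentially the paper's proof: the Schur-complement reduction coincides with \Cref{lem:equivalent}, the conditional sub-Gaussian tail for the linear functional in $\vz_i$ is \Cref{lem:prob-B_i}, and the $\eigmin(\vK_{\lo i})\gtrsim\|\vlambda\|_1$ estimate via Hanson--Wright plus an $\epsilon$-net is exactly \Cref{lem:randmatrix} as used in \Cref{lem:prob-E_i} and \Cref{lem:prob-singular}. Two points need fixing, though.

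First, the coefficient vector of the linear functional in $\vz_i$ carries a full $\diag{\vlambda}$, not $\diag{\vlambda}^{1/2}$: since $\vK_{i,\lo i}=\vZ_{\lo i}\diag{\vlambda}\vz_i$, one has $\vK_{i,\lo i}^\T\vK_{\lo i}^{-1}\vy_{\lo i}=\langle\vz_i,\diag{\vlambda}\vZ_{\lo i}^\T\vK_{\lo i}^{-1}\vy_{\lo i}\rangle$. The relevant squared norm is therefore $\vy_{\lo i}^\T\vK_{\lo i}^{-1}\vZ_{\lo i}\diag{\vlambda}^2\vZ_{\lo i}^\T\vK_{\lo i}^{-1}\vy_{\lo i}$, which does \emph{not} collapse to $\vy_{\lo i}^\T\vK_{\lo i}^{-1}\vy_{\lo i}$. (Sanity check: your bound $(n-1)/\eigmin(\vK_{\lo i})$ combined with $\eigmin\gtrsim\|\vlambda\|_1$ would put $\|\vlambda\|_1/(vn)$ in the exponent, which is not scale-invariant in $\vlambda$, whereas the SVM problem and the stated bound are.) The correct step, as in the paper, is to pull out $\|\diag{\vlambda}^{1/2}\|_{\op}^2=\|\vlambda\|_\infty$ to obtain the bound $\|\vlambda\|_\infty\cdot\vy_{\lo i}^\T\vK_{\lo i}^{-1}\vy_{\lo i}\leq n\|\vlambda\|_\infty/\eigmin(\vK_{\lo i})$; with $\eigmin\gtrsim\|\vlambda\|_1$ this gives variance $\lesssim n/d_\infty$ and hence the claimed $\exp(-c\,d_\infty/(vn))$ tail.

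Second, ``conditioning on $y_i$'' is not free under the general label model: $y_i$ may depend on $\vz_i$ (e.g., $y_i=\sign(\vx_i^\T\vw)$), so the conditional law of $\vz_i$ given $y_i$ need not retain the same sub-Gaussian parameter. The paper avoids this by enlarging the event $\{y_i(\cdot)\geq1\}$ to $\{|(\cdot)|\geq1\}$, which removes $y_i$ entirely and allows conditioning only on $(\vz_j,y_j)_{j\neq i}$, where $\vz_i$ is genuinely independent of the conditioning $\sigma$-field.
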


Observe that the probability from \Cref{thm:independent} is close to $1$ when
\begin{equation}
  d_2 \gg v^2 n
  \qquad \text{and} \qquad
  d_\infty \gg v n \log n .
\end{equation}
We can compare this condition to that from the prior work of \citet{muthukumar2020classification} in our setting with independent Gaussian features ($v=1$).
In the anisotropic setting (i.e., general $\vlambda$), the prior result's condition for every training example to be a support vector with high probability is $d_2 \gg n^2 \log n$ and $d_\infty \gg n^{3/2} \log n$.
In the isotropic setting (i.e., all $\lambda_j=1$), assuming the labels are fixed (i.e., non-random), the prior result's condition is $d \gg n \log n$.
\Cref{thm:independent} is an improvement in the anisotropic case, and it matches this prior result in the isotropic case.\footnote{%
  We remark that the result of \citet{muthukumar2020classification} for the anisotropic case, in fact, holds for all (fixed) label vectors $\vy \in \{-1,+1\}^n$ simultaneously.
  However, their proof does not readily give a tighter condition when only a single (random) label vector is considered.
  Our proof technique side-steps this issue by showing that it is sufficient to consider the scaling of quantities that do not depend on the value of the label vector.}

Our second main result provides an analogue of \Cref{thm:independent} for the case of Haar features (where neither training examples nor features are statistically independent).

\begin{theorem} \label{thm:haar}
  There are universal constants $C>0$ and $c>0$ such that the following holds.
  If the training data $(\vx_1,y_1),\dotsc,(\vx_n,y_n)$ follow the model from \Cref{sec:data} with Haar features and model parameter $\vlambda \in \bbR_{++}^d$, then the probability that every training example is a support vector is at least
  \begin{equation}
    1 - \exp\del{ -c \cdot d_\infty + Cn } - \exp\del{ -c \cdot \frac{d-n+1}{d} \cdot \frac{d_\infty}{n} + C \log n } .
  \end{equation}
\end{theorem}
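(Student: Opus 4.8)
The plan is to reduce support vector proliferation to a sign condition on the ridgeless regression solution, and then control that condition using the exact row‑orthogonality of Haar features. As recorded in \Cref{sec:svm-dual}, if $\vbeta := \vK^{-1}\vy$ satisfies $y_i\beta_i \ge 0$ for all $i$ then it is the optimal dual solution, and if moreover $y_i\beta_i > 0$ for all $i$ then every $\beta_i \ne 0$, i.e.\ every training example is a support vector. By the block‑matrix (Schur complement) identity, for each $i \in [n]$,
\begin{equation}
  y_i\,\ve_i^\T\vK^{-1}\vy \;=\; \frac{1 - y_i s_i}{K_{i,i} - \vK_{i,\lo i}\vK_{\lo i}^{-1}\vK_{\lo i,i}} ,
  \qquad s_i := \vK_{i,\lo i}\vK_{\lo i}^{-1}\vy_{\lo i} ,
\end{equation}
and the denominator is positive because $\vK \succ 0$. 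Hence it suffices to show $y_i s_i < 1$ for all $i$ with the stated probability; and since the feature and label models are both exchangeable across the $n$ examples, I would bound $\Pr(y_n s_n \ge 1)$ for one example and pay a union‑bound factor $n$ (an additive $\log n$ in the exponent).

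The Haar‑specific step is that the isotropic part of $\diag{\vlambda}$ is killed by the row‑orthogonality. Put $\bar\lambda := \norm{\vlambda}_1/d$ and $\vM := \diag{\vlambda} - \bar\lambda\vI_d$, so $\operatorname{tr}(\vM) = 0$ and $\vM^2 \preceq \norm{\vlambda}_\infty\diag{\vlambda} + \bar\lambda^2\vI_d$ (using $(\lambda_j-\bar\lambda)^2 \le \lambda_j^2 + \bar\lambda^2 \le \norm{\vlambda}_\infty\lambda_j + \bar\lambda^2$). Since $\vz_n \perp \vz_1,\dotsc,\vz_{n-1}$ exactly, $\vx_n^\T\vx_j = \vz_n^\T\diag{\vlambda}\vz_j = \vz_n^\T\vM\vz_j$ for $j<n$, hence
\begin{equation}
  s_n = \vz_n^\T\vM\vr ,
  \qquad \vr := \vZ_{\lo n}^\T\vK_{\lo n}^{-1}\vy_{\lo n} \in V := \operatorname{span}\{\vz_1,\dotsc,\vz_{n-1}\} .
\end{equation}
Conditioning on $(\vZ_{\lo n},\vy_{\lo n})$ fixes $\vr$, $\vM$, $V$, and — because beyond the features the labels contribute only independent label‑noise — leaves the conditional law of $\vz_n$ equal to the uniform distribution on the radius‑$\sqrt d$ sphere of the $(d-n+1)$‑dimensional subspace $V^\perp$. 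Writing $s_n = \sqrt d\,\langle \vz_n/\sqrt d,\, \vM\vr\rangle$ and using the standard concentration of a linear functional of a uniform point on this sphere (variance proxy at most $\norm{\vM\vr}_2^2/(d-n+1)$) gives
\begin{equation}
  \Pr\del{ \abs{s_n} \ge 1 \mid \vZ_{\lo n},\vy_{\lo n} }
  \;\le\; 2\exp\del{ -\,c\cdot\frac{d-n+1}{d\,\norm{\vM\vr}_2^2} } .
\end{equation}
(Since $\abs{y_n}=1$, $y_n s_n \ge 1$ forces $\abs{s_n}\ge1$, so the conditional law of $y_n$ plays no role.)

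It remains to bound $\norm{\vM\vr}_2^2$. Using $\vZ_{\lo n}\vZ_{\lo n}^\T = d\vI_{n-1}$ for Haar features and the Loewner bound on $\vM^2$,
\begin{equation}
  \norm{\vM\vr}_2^2 = (\vK_{\lo n}^{-1}\vy_{\lo n})^\T\vZ_{\lo n}\vM^2\vZ_{\lo n}^\T(\vK_{\lo n}^{-1}\vy_{\lo n})
  \;\le\; \norm{\vlambda}_\infty\,\vy_{\lo n}^\T\vK_{\lo n}^{-1}\vy_{\lo n} \;+\; \bar\lambda^2 d\,\norm{\vK_{\lo n}^{-1}\vy_{\lo n}}_2^2 .
\end{equation}
On the event $\cE := \{\eigmin(\vK_{\lo n}) \ge \norm{\vlambda}_1/2\}$ — which, for Haar features (and meaningfully when $n \lesssim d_\infty$), fails with probability at most $\exp(-c\,d_\infty + Cn)$ via the $\chi^2$‑type concentration of $\vv^\T\vK_{\lo n}\vv$ around $\norm{\vlambda}_1$ for fixed $\vv\in S^{n-2}$ combined with an $\varepsilon$‑net over $S^{n-2}$ — one has $\vy_{\lo n}^\T\vK_{\lo n}^{-1}\vy_{\lo n} \le 2n/\norm{\vlambda}_1$ and $\norm{\vK_{\lo n}^{-1}\vy_{\lo n}}_2^2 \le 4n/\norm{\vlambda}_1^2$, so $\norm{\vM\vr}_2^2 \lesssim n\norm{\vlambda}_\infty/\norm{\vlambda}_1 = n/d_\infty$ (using $d \ge d_\infty$). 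Substituting back gives $\Pr(y_n s_n \ge 1,\ \cE) \lesssim \exp\del{-c\cdot\tfrac{d-n+1}{d}\cdot\tfrac{d_\infty}{n}}$; combining with $\Pr(\cE^\compl) \le \exp(-c\,d_\infty + Cn)$, union‑bounding over the $n$ examples, and invoking exchangeability yields exactly the two exponential terms in the statement.

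The step I expect to be the crux — and the reason this is not a rerun of the proof of \Cref{thm:independent} — is the loss of independence between $\vx_i$ and $\vX_{\lo i}$: one can no longer condition on $\vX_{\lo i}$ and treat $s_i = \vx_i^\T\vZ_{\lo i}^\T\vK_{\lo i}^{-1}\vy_{\lo i}$ as a mean‑zero sub‑Gaussian variable. The trace‑zero recentering $\diag{\vlambda}\mapsto\vM$, which turns the exact row‑orthogonality into a genuine reduction, is what rescues the argument; after that, the only probabilistic inputs are (i) spherical concentration in $V^\perp$, responsible for the $\tfrac{d-n+1}{d}\tfrac{d_\infty}{n}$ exponent, and (ii) operator‑norm control of $\vK_{\lo n}$, the same ingredient already used for \Cref{thm:independent} and responsible for the $\exp(-c\,d_\infty+Cn)$ term.
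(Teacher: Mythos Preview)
Your proposal is correct and structurally parallels the paper's proof: both reduce to $y_i s_i < 1$ via \Cref{lem:equivalent} (your Schur complement identity is essentially how that lemma is established), condition on $\vZ_{\lo i}$, observe that $\vz_i/\sqrt d$ is then uniform on the unit sphere of $V^\perp$, apply sub-Gaussian concentration for a linear functional on that sphere, and finally control $\eigmin(\vK_{\lo i})$. Two points differ. First, your trace-zero recentering $\diag{\vlambda}\mapsto\vM$ is valid but not actually needed: the paper simply writes $s_i = \sqrt d\,(\vB_i^\T\vLambda\vZ_{\lo i}^\T\vK_{\lo i}^{-1}\vy_{\lo i})^\T\vu_i$ with $\vu_i$ uniform on $S^{d-n}$ and bounds $\|\vB_i^\T\vLambda\vr\|_2 \le \|\vLambda\vr\|_2$ directly, arriving at the same $n/d_\infty$ estimate---so the ``rescue'' you attribute to the recentering is not required, though it does no harm. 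Second, for $\Pr(\eigmin(\vK_{\lo i}) < \tfrac12\|\vlambda\|_1)$ the paper does not run an $\varepsilon$-net argument directly on Haar features; instead it couples $\vZ_{\lo i}$ in distribution to $\sqrt d\,\vU^\T$ where $\vS = \vV\vLambda_S\vU^\T$ is the SVD of an $(n{-}1)\times d$ standard Gaussian matrix, obtains $\eigmin(\vK_{\lo i}) \ge (d/\|\vLambda_S\|_{\op}^2)\,\eigmin(\vS\vLambda\vS^\T)$, and reuses the independent-features bound (\Cref{lem:randmatrix}) wholesale. Your direct route is workable but would need the extra step of handling the dependence among coordinates of $\vZ_{\lo i}^\T\vv/\sqrt d$ (e.g.\ via a Gaussian ratio representation); the paper's coupling sidesteps this.
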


\subsection{Weak converse}

Our final main result gives a weak converse to \Cref{thm:independent} in the case where the features are iid standard Gaussians and the labels are fixed.

\begin{theorem} \label{thm:converse}
  Let the training data $(\vx_1,y_1),\dotsc,(\vx_n,y_n)$ follow the model from \Cref{sec:data} with $\vlambda = (1,\dotsc,1)$, $\vz_1,\dotsc,\vz_n$ being iid standard Gaussian random vectors in $\bbR^d$, and $y_1,\dotsc,y_n \in \{\pm1\}$ being arbitrary but fixed (i.e., non-random) values.
  For any $d \geq n$, the probability that at least one training example is not a support vector is at least
  \begin{equation}
    \Phi\del{ -\sqrt{\frac{d-n+4+2\sqrt{d-n+2}}{n-1}} } \cdot \del{ 1 - \frac1e } ,
  \end{equation}
  where $\Phi$ is the cumulative distribution function of the standard Gaussian distribution.
\end{theorem}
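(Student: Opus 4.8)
The plan is to reduce the event ``at least one example is not a support vector'' to a one‑coordinate sign condition on the ridgeless solution $\vK^{-1}\vy$, and then to compute (a lower bound for) the probability of that condition using Gaussian rotational invariance together with a $\chi^2$ tail bound.

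\emph{Step 1: reduction to the ridgeless solution.} I would first establish the easy half of the deterministic characterization of support‑vector proliferation: \emph{if every training example is a support vector, then $\vbeta^\star=\vK^{-1}\vy$}. Indeed, under standard Gaussian features $\vK=\vZ\vZ^\T$ is almost surely nonsingular, and if the optimal $\vbeta^\star$ of \Cref{eq:svm-dual2} has $\beta_i^\star\neq0$ for all $i$, then combined with $y_i\beta_i^\star\ge0$ this gives $y_i\beta_i^\star>0$ for all $i$, so every constraint is inactive at $\vbeta^\star$ and the gradient of the dual objective vanishes there, i.e. $\vy-\vK\vbeta^\star=\v0$. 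Hence $\{\text{every example is a support vector}\}\subseteq\{y_i(\vK^{-1}\vy)_i>0\ \forall i\}$, and in particular $\Pr[\text{some example is not a support vector}]\ge\Pr[y_1(\vK^{-1}\vy)_1\le0]$; it suffices to lower‑bound the right‑hand side.

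\emph{Step 2: isolate one coordinate.} Partitioning $\vK$ along its first row and column and using the Schur‑complement formula for the inverse, $(\vK^{-1}\vy)_1 = s^{-1}\bigl(y_1-\vb^\T\vK_{\lo 1}^{-1}\vy_{\lo 1}\bigr)$ with $\vb:=\vZ_{\lo 1}\vz_1$ and $s:=\|\vz_1\|_2^2-\vb^\T\vK_{\lo 1}^{-1}\vb>0$. So, writing $T:=y_1\,\vb^\T\vK_{\lo 1}^{-1}\vy_{\lo 1}$, we have $y_1(\vK^{-1}\vy)_1\le0\iff T\ge1$. Conditionally on $\vZ_{\lo 1}$, the vector $\vb=\vZ_{\lo 1}\vz_1$ is $\Normal(\v0,\vK_{\lo 1})$ and independent of everything else, so $T$ is conditionally $\Normal(0,\sigma^2)$ with $\sigma^2:=\vy_{\lo 1}^\T\vK_{\lo 1}^{-1}\vy_{\lo 1}$, and therefore $\Pr[y_1(\vK^{-1}\vy)_1\le0]=\bbE\bigl[\Phi(-1/\sigma)\bigr]$.

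\emph{Step 3: the law of $\sigma^2$ and the tail bound.} The distribution of $\vK_{\lo 1}=\vZ_{\lo 1}\vZ_{\lo 1}^\T$ is invariant under $\vZ_{\lo 1}\mapsto\vQ\vZ_{\lo 1}$ for orthogonal $\vQ$, so I may replace $\vy_{\lo 1}$ by $\|\vy_{\lo 1}\|_2\,\ve_1=\sqrt{n-1}\,\ve_1$, giving $\sigma^2\overset{d}{=}(n-1)\,(\vK_{\lo 1}^{-1})_{11}$. A second Schur complement, peeling the first row off $\vZ_{\lo 1}$, yields $(\vK_{\lo 1}^{-1})_{11}=1/\bigl(\va^\T(\vI-\vP)\va\bigr)$ where $\va\sim\Normal(\v0,\vI)$ and $\vP$ is the a.s.\ rank‑$(n-2)$ orthogonal projection onto the row space of the remaining $(n-2)\times d$ Gaussian block, independent of $\va$; hence $\va^\T(\vI-\vP)\va\sim\chi^2_{d-n+2}$ and $1/\sigma^2\overset{d}{=}\tfrac1{n-1}\chi^2_{d-n+2}$. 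Since $t\mapsto\Phi(-\sqrt t)$ is decreasing, for any $q>0$ we get $\bbE[\Phi(-1/\sigma)]\ge\Phi\bigl(-\sqrt{q/(n-1)}\bigr)\Pr[\chi^2_{d-n+2}\le q]$; taking $q:=(d-n+2)+2\sqrt{d-n+2}+2=d-n+4+2\sqrt{d-n+2}$ and invoking the Laurent--Massart bound $\Pr[\chi^2_k\ge k+2\sqrt k+2]\le e^{-1}$ gives $\Pr[\chi^2_{d-n+2}\le q]\ge1-1/e$, which is exactly the claimed bound.

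\emph{Main obstacle.} Every ingredient here is elementary once Step 1 is in place; the part demanding the most care is the degrees‑of‑freedom bookkeeping across the two successive Schur‑complement reductions — each drops one row and leaves a residual orthogonal projection — so that one lands precisely on rank $d-n+2$ (and hence on the constant $d-n+4+2\sqrt{d-n+2}$ inside $\Phi$), while simultaneously verifying the almost‑sure full‑rank conditions that justify the block‑inversion identities and the representation $\vbeta^\star=\vK^{-1}\vy$.
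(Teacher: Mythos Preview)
Your proposal is correct and follows essentially the same route as the paper: reduce to the one-coordinate condition $y_1\vy_{\lo 1}^\T\vK_{\lo 1}^{-1}\vZ_{\lo 1}\vz_1\ge 1$, recognize it as conditionally $\Normal(0,\sigma^2)$ with $\sigma^2=\vy_{\lo 1}^\T\vK_{\lo 1}^{-1}\vy_{\lo 1}$, use rotational invariance plus the fact that $1/\ve_1^\T\vK_{\lo 1}^{-1}\ve_1\sim\chi^2_{d-n+2}$, and finish with Laurent--Massart at $x=1$. The only cosmetic differences are that you reach the one-coordinate condition via Schur complement rather than via the paper's Cramer's-rule/matrix-determinant-lemma derivation of \Cref{lem:equivalent}, and you spell out the second Schur complement for the $\chi^2_{d-n+2}$ law where the paper simply cites it as known.
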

Observe that the probability bound from \Cref{thm:converse} is at least a positive constant (independent of $d$ and $n$) whenever the dimension $d$ (regarded as a function of $n$) is $O(n)$.
This means that the dimension $d$ must be super-linear in $n$ in order for the ``success'' probability of \Cref{thm:independent} to tend to one with $n$.

\Cref{thm:converse} applies to the case where the features vectors are isotropic.
In \Cref{sec:anisotropic}, we give a version of the result that applies to certain anisotropic settings, again in the case of independent Gaussian features and fixed labels.
The theorem puts restrictions on the tail behavior of $\vlambda$.
These restrictions are related to the effective ranks studied by \citet{bartlett2020benign}.
The proof is similar to that of \Cref{thm:converse}, but also relies on a technical result from \citep{bartlett2020benign}.

Except when the ``success'' probability is required to be $\geq 1-1/n^c$ for constant $c>0$, there is a $\log(n)$ gap between the sufficient condition from \Cref{thm:independent} and the necessary condition from \Cref{thm:converse}.
As mentioned earlier, subsequent work of \citet{ardeshir2021support} closed this gap and showed that even for constant success probability, $d = \Omega(n \log n)$ is necessary.

\subsection{Deterministic equivalences}

The crux of all of the above results lies in the following key lemma, which characterizes equivalent conditions for every training example to be a support vector.

\begin{lemma}
  \label{lem:equivalent}
  Suppose $\vZ := [ \vz_1 | \dotsb | \vz_n ]^\T \in \bbR^{n \times d}$ and $\vlambda \in \bbR_{++}^d$ are such that $\vZ \diag{\vlambda} \vZ^\T$ and $\vZ_{\lo i} \diag{\vlambda} \vZ_{\lo i}^\T$ for all $i = 1,\dotsc,n$ are non-singular.
  Let the training data $(\vx_1,y_1),\dotsc,(\vx_n,y_n) \in \bbR^d \times \{-1,+1\}$ satisfy $\vx_i = \diag{\vlambda}^{1/2} \vz_i$ for each $i = 1,\dotsc,n$.
  Then the following are equivalent:
  \begin{enumerate}
    \item Every training example is a support vector.
    \item The vector $\vbeta := \vK^{-1} \vy$ satisfies $y_i\beta_i > 0$ for all $i=1,\dotsc,n$.
    \item $y_i \vy_{\lo i}^\T \del{ \vZ_{\lo i} \diag{\vlambda} \vZ_{\lo i}^\T }^{-1} \vZ_{\lo i} \diag{\vlambda} \vz_i < 1$ for all $i=1,\dotsc,n$.
  \end{enumerate}
\end{lemma}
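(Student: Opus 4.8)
The plan is to prove the three equivalences in a cycle: $(1) \Leftrightarrow (2)$ first, then $(2) \Leftrightarrow (3)$. The equivalence $(1) \Leftrightarrow (2)$ follows from the structure of the SVM dual in \Cref{eq:svm-dual2}. Since $\vK$ is non-singular, $\vbeta := \vK^{-1}\vy$ is the unique solution of the ridgeless regression problem $\vK\vbeta = \vy$. As noted in \Cref{sec:svm-dual}, if this $\vbeta$ happens to satisfy the sign constraints $y_i\beta_i \geq 0$ for all $i$, then it is automatically optimal for \Cref{eq:svm-dual2}. I would argue that in fact $\vbeta$ is the \emph{only} candidate: the objective $\vy^\T\vbeta - \frac12\vbeta^\T\vK\vbeta$ is strictly concave (as $\vK \succ 0$), so the dual has a unique maximizer $\vbeta^\star$, and every training example is a support vector iff $\beta_i^\star \neq 0$ for all $i$, which (given the constraint $y_i\beta_i^\star \geq 0$) is the same as $y_i\beta_i^\star > 0$ for all $i$. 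So it remains to show that ``every example is a support vector'' forces $\vbeta^\star = \vK^{-1}\vy$. Here I would use complementary slackness / stationarity: if all $\beta_i^\star \neq 0$, then none of the sign constraints is ``active'' in the sense needed to perturb the KKT stationarity condition — more precisely, the primal constraints $y_i\vx_i^\T\vw^\star = 1$ all hold with equality for support vectors, i.e., $\diag{\vy}\vZ\diag{\vlambda}\vZ^\T\vbeta^\star = \v1$ after unwinding, which rearranges to $\vK\vbeta^\star = \vy$. Conversely if $\vbeta := \vK^{-1}\vy$ satisfies $y_i\beta_i > 0$ then it is feasible and stationary, hence optimal with all dual coordinates nonzero.

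For $(2) \Leftrightarrow (3)$, the idea is a leave-one-out / Schur-complement computation on the linear system $\vK\vbeta = \vy$. Fix $i$ and partition $\vK$ into the block corresponding to example $i$ and the block $\vK_{\lo i} = \vZ_{\lo i}\diag{\vlambda}\vZ_{\lo i}^\T$ corresponding to the rest. The $i$th coordinate of $\vK^{-1}\vy$ can be written, via the block-inverse (Schur complement) formula, as a scalar multiple of $y_i - \vy_{\lo i}^\T \vK_{\lo i}^{-1} \vk_i$, where $\vk_i := \vZ_{\lo i}\diag{\vlambda}\vz_i$ is the off-diagonal block (the vector of cross inner products $\vx_j^\T\vx_i$ for $j \neq i$). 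The scalar multiplier is $1/s_i$ where $s_i := K_{ii} - \vk_i^\T\vK_{\lo i}^{-1}\vk_i > 0$ is the Schur complement, positive because $\vK$ is positive definite. Hence $y_i\beta_i > 0 \iff y_i(y_i - \vy_{\lo i}^\T\vK_{\lo i}^{-1}\vk_i) > 0 \iff 1 - y_i\vy_{\lo i}^\T\vK_{\lo i}^{-1}\vk_i > 0$ (using $y_i^2 = 1$), which is exactly condition (3) after substituting $\vk_i = \vZ_{\lo i}\diag{\vlambda}\vz_i$ and $\vK_{\lo i} = \vZ_{\lo i}\diag{\vlambda}\vZ_{\lo i}^\T$. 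The non-singularity of all the $\vZ_{\lo i}\diag{\vlambda}\vZ_{\lo i}^\T$ is precisely what makes $\vK_{\lo i}^{-1}$ well-defined and the Schur complement manipulation valid.

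The main obstacle, and the one deserving the most care, is the direction of $(1) \Rightarrow (2)$ — specifically, ruling out ``degenerate'' optimal dual solutions. It is tempting to say ``all examples are support vectors, so just solve the equality-constrained problem,'' but one must be careful that being a support vector is defined via $\alpha_i^\star > 0$ (equivalently $\beta_i^\star \neq 0$) at \emph{the} optimum, and one should confirm the optimum is unique (true here since $\vK \succ 0$) and that at this unique optimum, all primal margin constraints are tight for support vectors (standard complementary slackness for a convex QP with the Slater-type regularity that holds automatically for these affine constraints). Once uniqueness and complementary slackness are in hand, the passage to $\vK\vbeta^\star = \vy$ is a short rearrangement. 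Everything else — the concavity argument, the Schur-complement identity, the bookkeeping with $\diag{\vlambda}$ and $y_i^2 = 1$ — is routine linear algebra, so I would present $(2) \Leftrightarrow (3)$ compactly and spend the bulk of the writeup nailing down the optimality characterization in $(1) \Leftrightarrow (2)$.
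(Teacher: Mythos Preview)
Your proposal is correct and closely mirrors the paper's proof. The argument for $(1)\Leftrightarrow(2)$ is essentially identical: the paper also uses strict concavity for uniqueness of $\vbeta^\star$, and for the direction $(1)\Rightarrow(2)$ it invokes complementary slackness to conclude that all primal constraints are tight, so $\vw^\star$ solves the equality-constrained least-norm problem and hence $\vbeta^\star=\vK^{-1}\vy$ by linear independence of the $\vx_i$. For $(2)\Leftrightarrow(3)$ the paper takes a slightly different route: it writes $y_i\beta_i = \det(\tilde\vK)/\det(\vK)$ via Cramer's rule (where $\tilde\vK$ replaces the $i$th row of $\vK$ by $y_i\vy^\T$), then reduces $\det(\tilde\vK)$ via a unit upper-triangular column operation and the matrix determinant lemma to $\det(\vK_{\lo i})\bigl(1 - y_i\vy_{\lo i}^\T\vK_{\lo i}^{-1}\vZ_{\lo i}\vLambda\vz_i\bigr)$. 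Your Schur-complement/block-inverse computation reaches the same expression more directly, reading off $\beta_i = (y_i - \vy_{\lo i}^\T\vK_{\lo i}^{-1}\vk_i)/s_i$ and noting $s_i>0$ from $\vK\succ 0$. Neither approach is deeper than the other---they are two standard ways to extract a single entry of an inverse---but yours avoids determinants altogether and makes the sign of the prefactor immediate.
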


The above lemma is a deterministic result---it does not reference a particular statistical model for the data---and hence the equivalences are given under non-singularity conditions.
We note that the non-singularity conditions are readily satisfied under the data model from \Cref{sec:data} (with high probability, in the case of independent features, or deterministically, in the case of Haar features).

The equivalences of the first two items in this lemma connect the solutions to the SVM optimization problem and the ridgeless regression problem more tightly than was done in the prior work of \citet{muthukumar2020classification}, who only proved one direction of the equivalence between the first two items.
The proofs of our main results critically use the third item in the above equivalence.

\subsection{Implications for generalization}
\label{sec:discussion}

In \Cref{thm:independent} and \Cref{thm:haar}, we identified high-dimensional regimes in which the SVM solution exactly corresponds to the least norm (linear) interpolation of training data with high probability.
We observe in \Cref{fig:fourier} that certain deterministic featurizations (which bear some resemblance to the Haar features of \Cref{thm:haar}, and have been independently analyzed in the interpolating regime for regression problems~\citep{belkin2019two,muthukumar2020harmless}) also empirically exhibit similar support vector proliferation when the effective overparameterization is sufficiently high.

\begin{figure}
  \centering
  \begin{tabular}{cc}
    \includegraphics[width=0.48\textwidth]{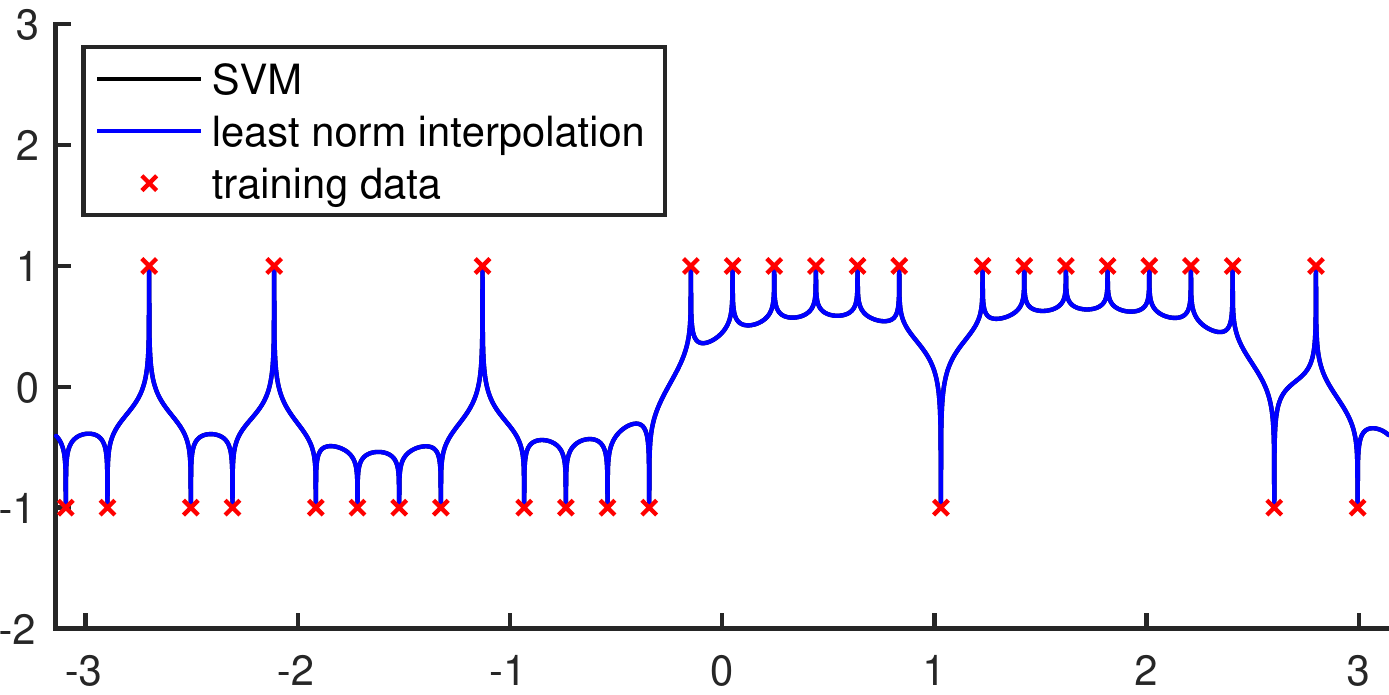}
    &
    \includegraphics[width=0.48\textwidth]{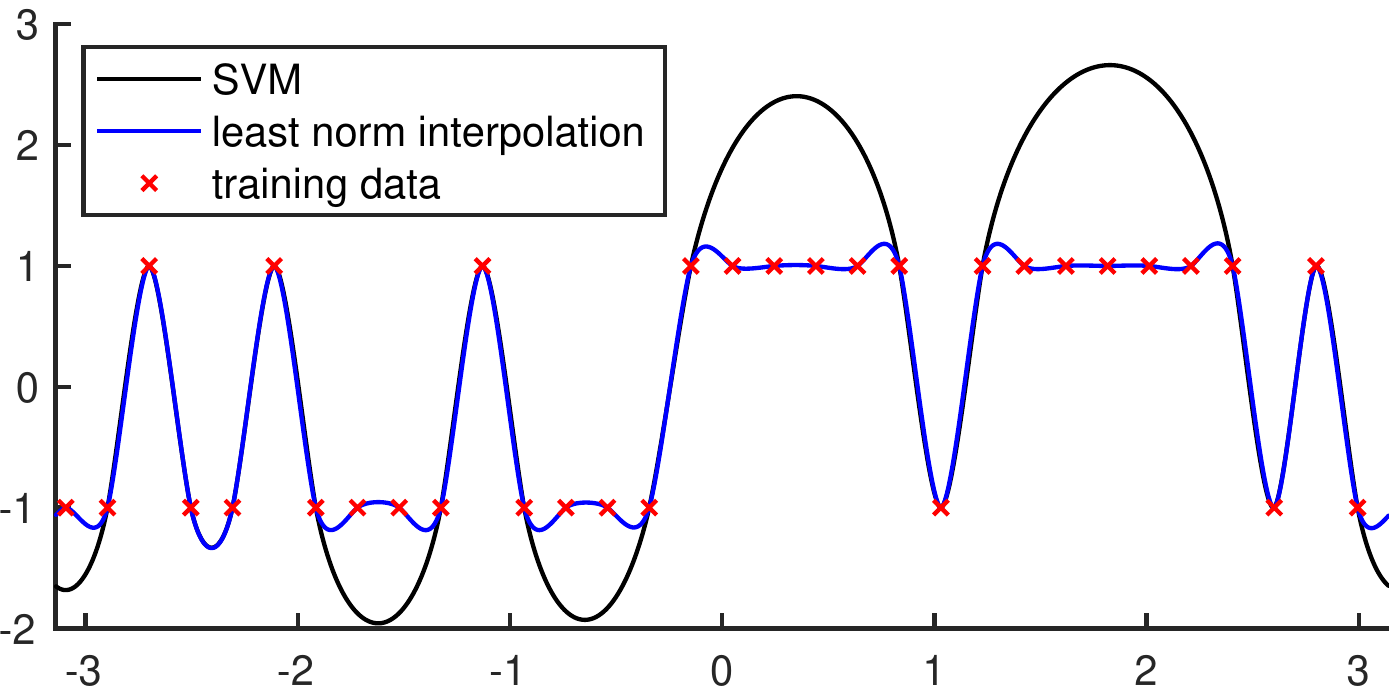}
    \\
    (a) $\eta_i = 1/i$
    &
    (b) $\eta_i = 1/i^3$
  \end{tabular}
  \caption{%
    Plots of linear functions on top of trigonometric features of a scalar input variable that parameterizes the horizontal axis.
    (These plots originally appeared in \citep{muthukumar2020classification}.)
    The two linear functions are those given by the solution to the SVM optimization problem and the ridgeless regression problem (i.e., the least norm interpolation), based on $32$ training data shown as \textcolor{red}{$\times$}'s in the plot.
    The features are obtained via the mapping $t \mapsto (1, \sqrt{\eta_1} \cos(1 \cdot t), \sqrt{\eta_1} \sin(1 \cdot t), \dotsc, \sqrt{\eta_k} \cos(k \cdot t), \sqrt{\eta_k} \sin(k \cdot t)) \in \bbR^{2k+1}$ where $k=2^{14}$.
    In (a), the SVM and least norm interpolation coincide exactly (so all $32$ examples are support vectors); in (b), the functions are noticeably distinct (and only $18$ out of $32$ examples are support vectors).
    In each case, we computed analogues of $d_2$ and $d_\infty$ based on the eigenvalues of the Gram matrix.
    In (a), they are $108.386$ and $21.5626$; in (b), they are $3.21378$ and $2.20198$.%
  }
  \label{fig:fourier}
\end{figure}

The regimes considered in our results go beyond the common high-dimensional asymptotic where $d$ and $n$ grow proportionally to each other (i.e., $n/d \to \delta$ as $n,d \to \infty$).
One may wonder, then, whether these regimes are too high dimensional for the SVM to generalize well.
As mentioned in \Cref{sec:intro}, the classical generalization error bounds for the SVM are based on the number of support vectors or the worst-case margin achieved on the training examples.
Recall that these upper bounds are, respectively, roughly of the form\footnote{Some bounds are given as the square-roots of the expressions we show, but whether or not the square-root is used will not make a difference in our case. We also omit constants (which are typically larger than $1$), polylogarithmic factors in $n$, and terms related to the confidence level for the bound.}
\begin{equation}
  \frac{\text{\# support vectors}}{n}
  \qquad\text{and}\qquad
  \frac{\|\vw^\star\|_2^2}{n} \cdot \frac{\bbE[\tr{\vK}]}{n}
  .
\end{equation}
Here, $\vw^\star$ is the solution to the SVM primal problem in \Cref{eq:svm-primal}.
Unfortunately, these bounds are not informative for the high-dimensional regimes in which all training points become support vectors.
As soon as $d_2$ and $d_\infty$, respectively, grow beyond $n$ and $n \log n$, then both bounds above become trivial with probability tending to one.
This is immediately apparent for the first bound, as a consequence of \Cref{thm:independent}.
For the second bound, an inspection of the proof of \Cref{thm:independent} shows that in an event where every training example is a support vector (with the same probability as given in \Cref{thm:independent}), we have
\begin{align}
  \|\vw^\star\|_2^2 & = \vy^\T \vK^{-1} \vy \geq \frac{n}{\|\vK\|_{\op}} \geq \frac{n}{2\|\vlambda\|_1} .
\end{align}
Since $\bbE[ \tr[0]{\vK} ] / n = \|\vlambda\|_1$, the second bound is at least $1/2$ in this event.
We also remark that even more sophisticated generalization bounds using the distribution of the margin on training examples~\citep[e.g.,][]{gao2013doubt} do not help in this high-dimensional regime.
This is because when all training examples become support vectors, the normalized margin of every training point becomes exactly the worst-case margin, which is $1/\|\vw^\star\|_2$.

However, recent analyses show that the SVM can generalize well even when all training points become support vectors.
In particular, the recent work of~\citet{muthukumar2020classification} provided positive implications for the SVM by analyzing the classification test error of the least norm interpolation.
In particular, they considered a special anisotropic Gaussian ensemble inspired by spiked covariance models, parameterized by positive constants $p > 1$ and $0 < (q,r) < 1$; here, $d = n^p$ and $(q,r)$ parameterize the eigenvalues of the feature covariance matrix and the sparsity of the unknown signal respectively.
See~\cite[Section~3.4]{muthukumar2020classification} for further details.
It suffices for our purposes to note that the main result of~\citet[Theorem 2]{muthukumar2020classification} showed that the following \emph{rate region} of $(p,q,r)$ is necessary and sufficient for the least norm interpolation of training data to generalize well, in the sense that the classification test error goes to $0$ as $n \to \infty$: 
\begin{align}\label{eq:rateregion}
  0 & \leq q < 1 - r + \frac{p-1}{2} .
\end{align}
It is easy to verify that \Cref{thm:independent} directly implies good generalization of the SVM for this entire rate region.
First, for $q \geq 1-r$, it holds that
\begin{align}
  d_2 & \asymp n^{2p - \max\{2p - 2q - r, p\}} \\
  d_{\infty} & \asymp n^{q + r} ,
\end{align}
and since we have assumed $p > 1$, the conditions of \Cref{thm:independent}, i.e., $d_2 \gg n$, $d_{\infty} \gg n \log n$, would hold if and only if $q > 1-r$.
On the other hand, the usual margin-based bounds would show good generalization of the SVM \emph{if} $0 \leq q < (1-r)$.
Putting these together, the SVM generalizes well for the entire rate region in Equation~\eqref{eq:rateregion}.

Further, the improvement of this implication over the partial implications for the SVM that were provided in~\citet{muthukumar2020classification} is clear.
In particular,~\cite[Corollary 1]{muthukumar2020classification} required $p > 2$, i.e. $d \gg n^2$, and showed that the SVM will then generalize well if $(3/2 - r) < q < (1-r) + (p-1)/2$.
Thus, the rate region implied by this work was
\begin{align}\label{eq:rateregionsuboptimal}
    \left\{0 \leq q < (1-r)\right\} \cup \left\{\left(\frac{3}{2} - r\right) < q < (1-r) + \frac{(p-1)}{2}\right\} ,
\end{align}
which has a non-trivial gap compared to \Cref{eq:rateregion}.
In summary, our results imply an expansion over the rate region predicted by classical generalization bounds based on either the number of support vectors or the margin.

\section{Proofs}
\label{sec:proofs}

This section gives the proofs of the main results, as well as the proof of the main technical lemma.

Throughout, we use the shorthand notations $\vLambda := \diag{\vlambda}$ and $\vK_{\lo i} := \vZ_{\lo i} \vLambda \vZ_{\lo i}^\T$ for each $i = 1,\dotsc,n$.
Note that $\vK_{\lo i}$ is the same as $\vK = \vZ \diag{\vlambda} \vZ^\T$ except omitting both the \ith row \emph{and} the \ith column (whereas $\vZ_{\lo i}$ only omits the \ith row of $\vZ$).

\subsection{Proof of \Cref{lem:equivalent}}

Recall that we assume $\vK$ and $\vK_{\lo i}$ for all $i = 1,\dotsc,n$ are non-singular.
We first show that all training examples are support vectors if and only if the candidate solution $\vbeta = \vK^{-1} \vy$ satisfies
\begin{align}
  \label{eq:complementaryslacknesscondition}
  y_i \beta_i > 0 \quad \text{for all $i = 1,\dotsc,n$} .
\end{align}
\begin{itemize}
  \item 
    ($\impliedby$)
    Assume $y_i\beta_i > 0$ for all $i \in [n]$.
    Recall that $\vbeta = \vK^{-1} \vy$ is the unique optimal solution to the ridgeless regression problem (i.e., the problem in \Cref{eq:svm-dual2} without the $n$ constraints).
    Since \Cref{eq:complementaryslacknesscondition} holds, then $\vbeta$ is dual-feasible as well, and so it is the unique optimal solution to the dual program, i.e., $\vbeta^\star = \vbeta$.
    Moreover, $y_i \beta^\star_i > 0 \implies \beta^\star_i \neq 0$ for all $i \in [n]$, and so every training example is a support vector.

  \item 
    ($\implies$)
    Assume every training example is a support vector, i.e., $\beta_i^\star \neq 0$ for all $i \in [n]$ (so, in particular, $y_i\beta_i^\star > 0$ for all $i \in [n]$).
    We shall write the solution $\vw^\star$ to the primal problem from \Cref{eq:svm-primal} as a linear combination of $\vx_1,\dotsc,\vx_n$ in two ways.
    The first way is in terms of the dual solution $\vbeta^\star$, i.e., $\vw^\star = \sum_{i=1}^n \beta_i^\star \vx_i$, which follows by strong duality.
    The second way comes via complementary slackness, which implies that $\vw^\star$ satisfies every constraint in \Cref{eq:svm-primal} with equality.
    In other words, $\vw^\star$ solves
    \begin{equation}
      \begin{aligned}
        \min_{\vw \in \bbR^d} \quad & \frac12 \|\vw\|_2^2 \\
        \text{subj.\ to} \quad & \vx_i^\T\vw = y_i \quad \text{for all $i=1,\dotsc,n$} .
      \end{aligned}
      \label{eq:svm-primal-equality}
    \end{equation}
    Since $\vK$ is non-singular by assumption, the solution is unique and is given by $\vX^\T \vK^{-1} \vy = \vX^\T \vbeta = \sum_{i=1}^n \beta_i \vx_i$, where $\vX = [ \vx_1 | \dotsb | \vx_n ]^\T$.
    So we have $\vw^\star = \sum_{i=1}^n \beta_i^\star \vx_i = \sum_{i=1}^n \beta_i \vx_i$.
    The non-singularity of $\vK$ also implies that $\vx_1,\dotsc,\vx_n$ are linearly independent, so we must have $\beta_i = \beta_i^\star \neq 0$ for all $i \in [n]$, and thus \Cref{eq:complementaryslacknesscondition} holds.

\end{itemize}

So we have shown that all training examples are support vectors if and only if \Cref{eq:complementaryslacknesscondition} holds.
It therefore suffices to show that, for each $i=1,\dotsc,n$,
\begin{equation}
  y_i \beta_i > 0 \quad \iff \quad y_i \vy_{\lo i}^\T \vK_{\lo i}^{-1} \vZ_{\lo i} \vLambda \vz_i < 1
   .
\end{equation}
By symmetry, we only need to show this implication for $i=1$.

Observe that $y_1 \beta_1 = y_1\ve_1^\T\vK^{-1} \vy = \ve_1^\T\vK^{-1} (y_1\vy)$ is the inner product between the first row of $\vK^{-1}$ and $y_1\vy$.
Therefore, by Cramer's rule, we have
\begin{align}
  y_1\beta_1
  & = y_1\ve_1^\T\vK^{-1} \vy
  = \frac{\det(\tilde\vK)}{\det(\vK)}
\end{align}
where $\tilde\vK$ is the matrix obtained from $\vK$ by replacing the first row with $y_1\vy^\T$.
Since $\vK$ is assumed to be invertible, $\vK$ is positive definite, and so $\det(\vK) > 0$.
Hence, we have $y_1\beta_1 > 0$ \emph{iff} $\det(\tilde\vK) > 0$.

Let us write $\tilde\vK$ as
\begin{align}
  \tilde\vK
  & =
  \begin{bmatrix}
    1 & y_1\vy_{\lo 1}^\T \\
    \va & \vK_{\lo 1}
  \end{bmatrix} ,
\end{align}
where $\va := \vZ_{\lo 1} \vLambda \vz_1$ and recall that $\vK_{\lo 1}$ denotes the $(n-1) \times (n-1)$ matrix obtained by removing the first row and column from $\vK$.
Note that $\vK_{\lo 1}$ is invertible by assumption and hence positive definite.
Also, define
\begin{align}
  \vQ & :=
  \begin{bmatrix}
    1 & -y_1 \vy_{\lo 1}^\T \\
    \v0 & \vI_{n-1}
  \end{bmatrix} ,
\end{align}
where $\vI_{n-1}$ is the $(n-1)\times(n-1)$ identity matrix.
Every diagonal entry of $\vQ$ is equal to $1$, so $\det(\vQ) = 1$.
Hence
\begin{align}
  \det(\tilde\vK)
  & = \det(\tilde\vK) \det(\vQ) \\
  & = \det(\tilde\vK\vQ) \\
  & = \det\del{
    \begin{bmatrix}
      1 & \v0^\T \\
      \va & \vK_{\lo 1} - y_1 \va \vy_{\lo 1}^\T
    \end{bmatrix}
  } \\
  & = \det(\vK_{\lo 1} - \va\vb^\T)
\end{align}
where $\vb := y_1 \vy_{\lo 1}$.
Therefore, $\det(\tilde\vK) > 0$ \emph{iff} $\det(\vK_{\lo 1} - \va\vb^\T) > 0$.

By the matrix determinant lemma,
\begin{align}
  \det(\vK_{\lo 1} - \va\vb^\T)
  & = \det(\vK_{\lo 1}) (1 - \vb^\T \vK_{\lo 1}^{-1} \va) .
\end{align}
Since $\vK_{\lo 1}$ is positive definite, we have $\det(\vK_{\lo 1}) > 0$.
Hence, $\det(\vK_{\lo 1} - \va\vb^\T) > 0$ \emph{iff} $\vb^\T \vK_{\lo 1}^{-1} \va < 1$.

Connecting all of the equivalences and plugging-in for $\va$, $\vb$, and $\vK_{\lo 1}$, we have shown that
\begin{equation}
  y_1\beta_1 > 0
  \quad \iff \quad
  y_1\vy_{\lo 1}^\T (\vZ_{\lo 1} \vLambda \vZ_{\lo 1}^\T)^{-1} \vZ_{\lo 1}\vLambda\vz_1 < 1 ,
\end{equation}
as required.
This completes the proof of the lemma.
\qed

\subsection{Proof of \Cref{thm:independent}}

We fix $t = t(n,\vlambda)>0$ to a positive value depending on $\vlambda$ and $n$ that will be determined later.
We define the following events:
\begin{enumerate}
  \item For $i \in [n]$, $\cB_i$ is the event that $\vK_{\lo i}$ is non-singular and
    \begin{align}
      y_i \vy_{\lo i}^\T \vK_{\lo i}^{-1} \vZ_{\lo i} \vLambda \vz_i & \geq 1 .
    \end{align}

  \item For $i \in [n]$, $\cS_i$ is the event that $\vK_{\lo i}$ is singular.

  \item $\cS$ is the event that $\vK$ is singular.

  \item $\cB := \cS \cup \bigcup_{i=1}^n (\cB_i \cup \cS_i)$.

\end{enumerate}

Additionally, we define the event $\cE_i(t)$, for every $i \in [n]$ and a given $t > 0$, that $\vK_{\lo i}$ is non-singular and
    \begin{align}
      \norm{ \vLambda \vZ_{\lo i}^\T \vK_{\lo i}^{-1} \vy_{\lo i} }_2^2 & \geq \frac1t .
    \end{align}

Note that if the event $\cB$ does not occur, then $\vZ \vLambda \vZ^\T$ is non-singular, each $\vK_{\lo i}$ is non-singular, and
\begin{align}
  y_i \vy_{\lo i}^\T \vK_{\lo i}^{-1} \vZ_{\lo i} \vLambda \vz_i & < 1 , \quad \text{for all $i=1,\dotsc,n$} .
\end{align}
Hence, by \Cref{lem:equivalent}, if $\cB$ does not occur, then every training example is a support vector.

So, it suffices to upper-bound the probability of the event $\cB$.
We bound $\Pr(\cB)$ as follows:
\begin{align}
  \Pr(\cB)
  & \leq \Pr(\cS) + \sum_{i=1}^n \Pr(\cB_i \cup \cS_i) \\
  & = \Pr(\cS) + \sum_{i=1}^n \del{ \Pr((\cB_i \cap \cS_i^\compl \cap \cE_i(t)^\compl) \cup (S_i \cap \cE_i(t)^\compl)) + \Pr((\cB_i \cup \cS_i) \cap \cE_i(t)) } \\
  & \leq \Pr(\cS) + \sum_{i=1}^n \del{ \Pr(\cB_i \mid \cS_i^\compl \cap \cE_i(t)^\compl) \Pr(\cS_i^\compl \cap \cE_i(t)^\compl) + \Pr(S_i \cap \cE_i(t)^\compl) + \Pr((\cB_i \cup \cS_i) \cap \cE_i(t)) } \\
  & \leq \Pr(\cS) + \sum_{i=1}^n \del{ \Pr(\cB_i \mid \cS_i^\compl \cap \cE_i(t)^\compl) + \Pr(S_i) + \Pr(\cE_i(t)) }
  .
  \label{eq:prob-bad-events}
\end{align}
Above, the first two inequalities follow from the union bound, and the rest uses the law of total probability.

We first upper bound the probability of the singularity events in the following lemma.
\begin{lemma}
  \label{lem:prob-singular}
  We have
  \begin{align}
    \max\{ \Pr(\cS) , \Pr(\cS_1) , \dotsc, \Pr(\cS_n) \}
    & \leq 2 \cdot 9^n \cdot \exp\del{ -c \cdot \min\cbr{ \frac{d_2}{v^2} ,\, \frac{d_\infty}{v} } }
  \end{align}
  where $c>0$ is the universal constant in the statement of \Cref{lem:randmatrix}.
\end{lemma}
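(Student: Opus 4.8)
Lemma~\ref{lem:prob-singular} asserts that the singularity events $\cS$ (that $\vK = \vZ\vLambda\vZ^\T$ is singular) and $\cS_i$ (that $\vK_{\lo i} = \vZ_{\lo i}\vLambda\vZ_{\lo i}^\T$ is singular) each have probability at most $2\cdot 9^n\exp(-c\min\{d_2/v^2,\,d_\infty/v\})$, where $c$ is the constant in a forthcoming matrix concentration result (\Cref{lem:randmatrix}).

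\begin{proof}[Proof proposal]
The plan is to reduce all $n+1$ singularity events to a single quantitative statement about the smallest singular value of $\diag{\vlambda}^{1/2}\vZ^\T$ (equivalently, the smallest eigenvalue of $\vK$), and then invoke the matrix concentration lemma \Cref{lem:randmatrix}. First I would observe that $\vK = \vZ\vLambda\vZ^\T$ is singular if and only if the $n\times d$ matrix $\vX^\T = \vLambda^{1/2}\vZ^\T$ (whose columns are $\vx_1,\dotsc,\vx_n$ — wait, rows) fails to have full row rank $n$, i.e.\ iff $\eigmin(\vZ\vLambda\vZ^\T) = 0$. The same holds for each $\vK_{\lo i}$ with $\vZ$ replaced by the $(n-1)\times d$ submatrix $\vZ_{\lo i}$. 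Since $\vZ_{\lo i}$ is itself a matrix of the same type (independent, mean-zero, unit-variance, $v$-subgaussian entries) with one fewer row, any tail bound on $\Pr(\eigmin(\vZ\vLambda\vZ^\T) \le 1/2 \cdot \text{(typical value)})$ that holds uniformly in the number of rows $m \le n$ will immediately control all of $\Pr(\cS),\Pr(\cS_1),\dotsc,\Pr(\cS_n)$ by the same bound (the $(n-1)$-row events are dominated by the $n$-row bound since the relevant exponent only improves with fewer rows).

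The core step is then to show that $\eigmin(\vZ\vLambda\vZ^\T)$ is bounded away from $0$ with the claimed probability. I would do this via the standard $\eps$-net argument on the sphere $S^{n-1}$: $\eigmin(\vZ\vLambda\vZ^\T) = \inf_{\vu\in S^{n-1}} \vu^\T\vZ\vLambda\vZ^\T\vu = \inf_{\vu\in S^{n-1}} \|\vLambda^{1/2}\vZ^\T\vu\|_2^2$. For a fixed unit vector $\vu$, the vector $\vLambda^{1/2}\vZ^\T\vu \in \bbR^d$ has independent coordinates $\sqrt{\lambda_j}\,(\sum_i u_i z_{i,j})$, each of which is mean-zero, variance $\lambda_j$, and subgaussian with parameter $\lambda_j v$. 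Hence $\|\vLambda^{1/2}\vZ^\T\vu\|_2^2$ is a sum of $d$ independent subexponential random variables with mean $\|\vlambda\|_1$, and a Bernstein-type lower-tail bound gives $\Pr(\|\vLambda^{1/2}\vZ^\T\vu\|_2^2 \le \tfrac12\|\vlambda\|_1) \le \exp(-c'\min\{\|\vlambda\|_1^2/(v^2\|\vlambda\|_2^2),\,\|\vlambda\|_1/(v\|\vlambda\|_\infty)\}) = \exp(-c'\min\{d_2/v^2,\,d_\infty/v\})$ — precisely the exponent appearing in the statement. The factor $2\cdot 9^n$ then comes from a union bound over a $1/4$-net of $S^{n-1}$ (which has size at most $9^n$) together with the approximation lemma that converts a net bound into a bound on the infimum over the whole sphere, at the cost of a constant factor loss in the net radius (absorbed into $c$) and a factor-of-$2$ from completing the approximation. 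This is exactly the content that I expect \Cref{lem:randmatrix} packages, so in practice the proof will read: ``apply \Cref{lem:randmatrix} with the appropriate number of rows $m\in\{n-1,n\}$ and note the bound is monotone in $m$.''

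The main obstacle — and really the only nontrivial ingredient — is establishing the correct \emph{lower} tail bound for the random quadratic form $\|\vLambda^{1/2}\vZ^\T\vu\|_2^2$ with the sharp exponent $\min\{d_2/v^2, d_\infty/v\}$, handling general (non-isotropic) $\vlambda$ and only subgaussian (not necessarily continuous or symmetric) entries. The subtlety is that the summands are weighted $\chi^2$-like variables, so one must track both the ``$L^2$'' regime (governed by $\|\vlambda\|_2^2$, giving the $d_2/v^2$ term) and the ``$L^\infty$'' regime (governed by $\|\vlambda\|_\infty$, giving the $d_\infty/v$ term), as in the Laurent–Massart-style bounds cited after the definition of $d_2,d_\infty$. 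This is precisely what \Cref{lem:randmatrix} is designed to supply, so given that lemma the proof of \Cref{lem:prob-singular} is a short reduction: identify singularity with $\eigmin = 0$, bound $\Pr(\eigmin \le 1/2\cdot\text{typical})$ by the net argument / cited lemma uniformly over $m \le n$ rows, and take a maximum over the $n+1$ events.
\end{proof}
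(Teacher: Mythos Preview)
Your proposal is correct and follows essentially the same route as the paper: reduce singularity of $\vK$ (and the $\vK_{\lo i}$) to a lower-tail event for $\eigmin(\vZ\vLambda\vZ^\T)$, then invoke the packaged $\eps$-net / Bernstein argument of \Cref{lem:randmatrix} (the paper applies it with $\tau = \|\vlambda\|_1$, since singularity of a PSD matrix forces $\|\vZ\vLambda\vZ^\T - \|\vlambda\|_1\vI\|_{\op} \ge \|\vlambda\|_1$). The only cosmetic difference is in how you handle the $\cS_i$: you re-apply the lemma with $n-1$ rows and note monotonicity in $m$, whereas the paper observes in one line that each $\vK_{\lo i}$ is a principal submatrix of $\vK$, so $\eigmin(\vK_{\lo i}) \ge \eigmin(\vK)$ and hence $\Pr(\cS_i) \le \Pr(\cS)$ directly.
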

\begin{proof}
  It suffices to bound $\Pr(\cS)$, since each $\vK_{\lo i}$ is a principal submatrix of $\vK$, and hence $\eigmin(\vK_{\lo i}) \geq \eigmin(\vK)$ for all $i \in [n]$.
  Observe that
  \begin{align}
    \vZ \vLambda \vZ^\T
    & = \sum_{j=1}^d \lambda_j \vv_j \vv_j^\T
  \end{align}
  where $\vv_j$ is the \jth column of $\vZ$.
  Recall that the columns of $\vZ$ are independent, and so these vectors satisfy the conditions of \Cref{lem:randmatrix}.
  Moreover, since $\vZ \vLambda \vZ^\T$ is positive semi-definite, its singularity would require
  \begin{align}
    \norm{\vZ \vLambda \vZ^\T - \|\vlambda\|_1 \vI}_2
    & \geq \|\vlambda\|_1 .
  \end{align}
  The probability of this latter event can be bounded by \Cref{lem:randmatrix} with $\tau=\|\vlambda\|_1$, thereby giving the claimed bound on $\Pr(\cS)$.
  This completes the proof of the lemma.
\end{proof}

The next lemma upper bounds the probability of the event $\cB_i$ conditioned on the non-singularity event $\cS_i$ and the complement of the event $\cE_i(t)$.
\begin{lemma}
  \label{lem:prob-B_i}
  For any $t>0$,
  \begin{align}
    \Pr(\cB_i \mid \cS_i^\compl \cap \cE_i(t)^\compl) & \leq 2\exp\del{ -\frac{t}{2v} } .
  \end{align}
\end{lemma}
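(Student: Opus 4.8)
The key observation is that, conditional on $\vZ_{\lo i}$ (and hence on $\vy_{\lo i}$, which only depends on the omitted rows), the vector $\vz_i$ is independent of everything in sight: the events $\cS_i^\compl$ and $\cE_i(t)^\compl$ are both measurable with respect to $\vZ_{\lo i}$, since $\vK_{\lo i} = \vZ_{\lo i}\vLambda\vZ_{\lo i}^\T$ and the norm $\|\vLambda\vZ_{\lo i}^\T\vK_{\lo i}^{-1}\vy_{\lo i}\|_2$ do not involve $\vz_i$. So I would first condition on $\vZ_{\lo i}$ in the favorable event $\cS_i^\compl \cap \cE_i(t)^\compl$, which fixes a deterministic vector
\begin{align}
  \vu := \vLambda \vZ_{\lo i}^\T \vK_{\lo i}^{-1} \vy_{\lo i} \in \bbR^d
\end{align}
with $\|\vu\|_2^2 < 1/t$ (this is precisely what $\cE_i(t)^\compl$ guarantees, together with non-singularity of $\vK_{\lo i}$ from $\cS_i^\compl$). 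The quantity controlling $\cB_i$ is then
\begin{align}
  y_i \vy_{\lo i}^\T \vK_{\lo i}^{-1} \vZ_{\lo i} \vLambda \vz_i
  = y_i\, \vu^\T \vz_i
  = y_i \sum_{j=1}^d u_j z_{i,j} .
\end{align}

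Next, I would exploit the fact that $y_i$ depends only on $\vx_i = \vLambda^{1/2}\vz_i$, hence only on $\vz_i$, which is the source of a subtlety: $y_i \vu^\T\vz_i$ is not simply a sub-Gaussian linear form because of the $y_i$ factor. However, $|y_i\vu^\T\vz_i| = |\vu^\T\vz_i|$, so the event $\cB_i$ (that this quantity is $\geq 1$) is contained in the event $\{|\vu^\T\vz_i| \geq 1\}$, which no longer references $y_i$ at all. Now $\vu^\T\vz_i = \sum_j u_j z_{i,j}$ is, conditionally on $\vZ_{\lo i}$, a sum of independent mean-zero sub-Gaussian random variables with parameter $v\|\vu\|_2^2 < v/t$ in aggregate (using $\bbE e^{t z_{i,j}} \le e^{vt^2/2}$). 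A standard sub-Gaussian tail bound (Hoeffding-type) gives
\begin{align}
  \Pr\del{ |\vu^\T\vz_i| \geq 1 \,\big|\, \vZ_{\lo i} }
  \leq 2\exp\del{ -\frac{1}{2 v \|\vu\|_2^2} }
  \leq 2\exp\del{ -\frac{t}{2v} } ,
\end{align}
where the last step uses $\|\vu\|_2^2 < 1/t$. Since this bound holds for every realization of $\vZ_{\lo i}$ in the conditioning event, averaging over $\vZ_{\lo i}$ (restricted to $\cS_i^\compl \cap \cE_i(t)^\compl$) preserves it, yielding $\Pr(\cB_i \mid \cS_i^\compl \cap \cE_i(t)^\compl) \leq 2\exp(-t/(2v))$.

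The only real obstacle is the bookkeeping around $y_i$: one must be careful that the label's dependence on $\vz_i$ does not break the sub-Gaussian concentration of the linear form. The clean resolution is exactly the one above — pass from $\cB_i$ to the $y_i$-free event $\{|\vu^\T\vz_i|\ge 1\}$ before applying any tail bound — so that the label model enters nowhere in the probability estimate. Everything else is a routine conditioning argument plus a one-dimensional sub-Gaussian tail inequality, so I would state it with minimal fuss. I should double-check that $\|\vu\|_2^2$ is indeed the right variance proxy: $\var$-proxy of $\sum_j u_j z_{i,j}$ is $v\sum_j u_j^2 = v\|\vu\|_2^2$, and on $\cE_i(t)^\compl$ we have $\|\vu\|_2^2 < 1/t$ exactly (the inequality in the definition of $\cE_i(t)$ is $\ge 1/t$, so its complement gives $< 1/t$), which matches the claimed exponent.
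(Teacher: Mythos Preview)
Your proposal is correct and follows essentially the same route as the paper: pass from $\cB_i$ to the $y_i$-free event $\{|\vu^\T\vz_i|\ge 1\}$ (the paper calls this $\cB_i'$), then use independence of $\vz_i$ from $\{(\vz_j,y_j):j\neq i\}$ and a one-dimensional sub-Gaussian tail bound with variance proxy $v\|\vu\|_2^2 < v/t$. One wording nit: $\vy_{\lo i}$ is not deterministic given $\vZ_{\lo i}$ under the label model, so you should condition on $(\vZ_{\lo i},\vy_{\lo i})$ jointly (as the paper does) rather than say $\vy_{\lo i}$ ``only depends on the omitted rows''; the independence of $\vz_i$ from this pair is what makes the argument go through.
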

\begin{proof}
  Let $\cB_i'$ be the event that $\vK_{\lo i}$ is non-singular and
  \begin{align}
    \abs[1]{ \vy_{\lo i}^\T \vK_{\lo i}^{-1} \vZ_{\lo i} \vLambda \vz_i }
    & = \max\cbr[1]{ -\vy_{\lo i}^\T \vK_{\lo i}^{-1} \vZ_{\lo i} \vLambda \vz_i ,\, \vy_{\lo i}^\T \vK_{\lo i}^{-1} \vZ_{\lo i} \vLambda \vz_i }
    \geq 1 .
  \end{align}
  Since $|y_i| = 1$, it follows that $\cB_i \subseteq \cB_i'$, so
  \begin{align}
    \Pr(\cB_i \mid \cS_i^\compl \cap \cE_i(t)^\compl)
    & \leq \Pr(\cB_i' \mid \cS_i^\compl \cap \cE_i(t)^\compl) .
  \end{align}
  Conditional on the event $\cS_i^\compl \cap \cE_i(t)^\compl$, we have that $\vK_{\lo i}$ is non-singular and $\|\vLambda \vZ_{\lo i}^\T \vK_{\lo i}^{-1} \vy_{\lo i}\|_2^2 \leq 1/t$.
  Since $\vz_i$ is independent of $\{ (\vz_j,y_j) : j \neq i \}$, it follows that
  \begin{align}
    \vy_{\lo i}^\T \vK_{\lo i}^{-1} \vZ_{\lo i} \vLambda \vz_i
    = (\vLambda \vZ_{\lo i}^\T \vK_{\lo i}^{-1} \vy_{\lo i})^\T \vz_i
  \end{align}
  is (conditionally) sub-Gaussian with parameter at most $v \cdot \|\vLambda \vZ_{\lo i}^\T \vK_{\lo i}^{-1} \vy_{\lo i}\|_2^2 \leq v/t$.
  Then, the standard sub-Gaussian tail bound gives us
  \begin{align}
    \Pr\del{ \cB_i \mid \cS_i^\compl \cap \cE_i(t)^\compl }
    & \leq \Pr\del{ \cB_i' \mid \cS_i^\compl \cap \cE_i(t)^\compl }
    \leq 2\exp\del{-\frac{t}{2v}} .
  \end{align}
This completes the proof of the lemma.
\end{proof}

Finally, the following lemma upper bounds the probability of the event $\cE_i(t)$ for $t := d_{\infty}/2n$.
\begin{lemma}
  \label{lem:prob-E_i}
  \begin{align}
    \Pr(\cE_i(d_\infty/(2n)))
    & \leq 2 \cdot 9^{n-1} \cdot \exp\del{ -c \cdot \min\cbr{ \frac{d_2}{4v^2} ,\, \frac{d_\infty}{v} } }
  \end{align}
  where $c>0$ is the universal constant from \Cref{lem:randmatrix}.
\end{lemma}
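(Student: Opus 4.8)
The plan is to show that, after substituting $t = d_\infty/(2n)$, the event $\cE_i(d_\infty/(2n))$ is contained in the event that $\eigmin(\vK_{\lo i})$ has fallen well below its mean $\|\vlambda\|_1$, and then to control the probability of that small-eigenvalue event with the same matrix concentration inequality (\Cref{lem:randmatrix}) already used in the proof of \Cref{lem:prob-singular}.

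First I would record a purely deterministic bound valid whenever $\vK_{\lo i}$ is non-singular. Expanding the squared norm,
\[
  \norm{ \vLambda \vZ_{\lo i}^\T \vK_{\lo i}^{-1} \vy_{\lo i} }_2^2
  = \vy_{\lo i}^\T \vK_{\lo i}^{-1} \del{ \vZ_{\lo i} \vLambda^2 \vZ_{\lo i}^\T } \vK_{\lo i}^{-1} \vy_{\lo i} .
\]
Since every diagonal entry of $\vLambda$ is at most $\|\vlambda\|_\infty$, we have $\vLambda^2 \preceq \|\vlambda\|_\infty \vLambda$, hence $\vZ_{\lo i}\vLambda^2\vZ_{\lo i}^\T \preceq \|\vlambda\|_\infty \vK_{\lo i}$, so the right-hand side is at most $\|\vlambda\|_\infty \, \vy_{\lo i}^\T \vK_{\lo i}^{-1} \vy_{\lo i} \le \|\vlambda\|_\infty \, \|\vy_{\lo i}\|_2^2 / \eigmin(\vK_{\lo i}) = (n-1)\|\vlambda\|_\infty / \eigmin(\vK_{\lo i})$, using that $\vK_{\lo i}$ is positive definite and $\|\vy_{\lo i}\|_2^2 = n-1$. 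On $\cE_i(d_\infty/(2n))$ the left-hand side is at least $2n/d_\infty = 2n\|\vlambda\|_\infty / \|\vlambda\|_1$, so this forces $\eigmin(\vK_{\lo i}) \le \tfrac{n-1}{2n}\|\vlambda\|_1 \le \tfrac12 \|\vlambda\|_1$. Consequently $\Pr(\cE_i(d_\infty/(2n))) \le \Pr\del{ \eigmin(\vK_{\lo i}) \le \tfrac12 \|\vlambda\|_1 }$.

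It then remains to bound $\Pr(\eigmin(\vK_{\lo i}) \le \tfrac12\|\vlambda\|_1)$. Writing $\vK_{\lo i} = \vZ_{\lo i}\vLambda\vZ_{\lo i}^\T = \sum_{j=1}^d \lambda_j \vv_j \vv_j^\T$ with $\vv_j \in \bbR^{n-1}$ the $j$-th column of $\vZ_{\lo i}$, the $\vv_j$ are independent and satisfy the hypotheses of \Cref{lem:randmatrix} in ambient dimension $n-1$. Since $\eigmin(\vK_{\lo i}) \le \tfrac12\|\vlambda\|_1$ implies $\norm{\vK_{\lo i} - \|\vlambda\|_1 \vI_{n-1}}_{\op} \ge \tfrac12\|\vlambda\|_1$, applying \Cref{lem:randmatrix} with deviation level $\tau = \|\vlambda\|_1/2$ produces a bound of the form $2 \cdot 9^{n-1} \exp\del{ -c \cdot \min\{ \tau^2/(v^2\|\vlambda\|_2^2),\, \tau/(v\|\vlambda\|_\infty) \} }$; substituting $\tau = \|\vlambda\|_1/2$ turns the two terms in the minimum into $d_2/(4v^2)$ and $d_\infty/v$ (the latter up to the absolute constant), which is the claimed estimate.

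The argument is largely routine; the one point that requires care is the deterministic step — in particular, exploiting that the definition of $\cE_i$ already includes non-singularity of $\vK_{\lo i}$ (so that $\vK_{\lo i}^{-1}$ is meaningful), and handling the positive-semidefinite ordering $\vZ_{\lo i}\vLambda^2\vZ_{\lo i}^\T \preceq \|\vlambda\|_\infty\vK_{\lo i}$ so as to collapse the quadratic form in $\vy_{\lo i}$ into a statement purely about $\eigmin(\vK_{\lo i})$. Once that reduction is in place, the tail estimate is exactly the computation already performed for \Cref{lem:prob-singular}, only with ambient dimension $n-1$ and deviation level $\|\vlambda\|_1/2$ in place of $n$ and $\|\vlambda\|_1$.
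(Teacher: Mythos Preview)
Your proof is correct and follows essentially the same approach as the paper: both reduce $\cE_i(d_\infty/(2n))$ to the event $\eigmin(\vK_{\lo i}) \le \tfrac12\|\vlambda\|_1$ via the deterministic bound $\|\vLambda \vZ_{\lo i}^\T \vK_{\lo i}^{-1} \vy_{\lo i}\|_2^2 \le \|\vlambda\|_\infty\,\vy_{\lo i}^\T\vK_{\lo i}^{-1}\vy_{\lo i}$ (you obtain it via $\vLambda^2 \preceq \|\vlambda\|_\infty\vLambda$, the paper via factoring $\vLambda=\vLambda^{1/2}\vLambda^{1/2}$), and then invoke \Cref{lem:randmatrix} with $\tau=\|\vlambda\|_1/2$. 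Your observation that the second term of the minimum comes out as $d_\infty/(2v)$ rather than $d_\infty/v$ is accurate; the paper silently absorbs the factor $1/2$ into the constant.
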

\begin{proof}
  Let $\cE_i'(t)$ be the event that
  \begin{align}
    \eigmin(\vK_{\lo i})
    & \leq n\|\vlambda\|_\infty t .
  \end{align}
  Under $\cS_i^\compl$, the matrix $\vK_{\lo i}$ is non-singular.
  We get
  \begin{align}
    \|\vLambda \vZ_{\lo i}^\T \vK_{\lo i}^{-1} \vy_{\lo i}\|_2^2
    & \leq \|\vLambda^{1/2}\|_{\op}^2
    \|\vLambda^{1/2} \vZ_{\lo i}^\T \vK_{\lo i}^{-1} \vy_{\lo i}\|_2^2
    \\
    & = \|\vlambda\|_\infty
    \vy_{\lo i}^\T \vK_{\lo i}^{-1} \vZ_{\lo i} \vLambda \vZ_{\lo i}^\T \vK_{\lo i}^{-1} \vy_{\lo i}
    \\
    & \leq
    n \|\vlambda\|_\infty
    \sup_{\vu \in \bbR^{n-1} : \|\vv\|_2 = 1}
    \vu^\T \vK_{\lo i}^{-1} \vu
    \\
    & = \frac{n\|\vlambda\|_\infty}{\eigmin(\vK_{\lo i})}
    .
  \end{align}
  It follows that $\cE_i(t) \subseteq \cE_i'(t)$.
  Observe that for $t := d_\infty/(2n)$, the event $\cE_i'(t)$ is that where
  \begin{align}
    \eigmin(\vK_{\lo i}) & \leq \frac12 \|\vlambda\|_1 .
  \end{align}
  Therefore (as in the proof of \Cref{lem:prob-singular}), \Cref{lem:randmatrix} with $\tau=\|\vlambda\|_1/2$ implies that
  \begin{align}
    \Pr(\cE_i'(d_\infty/(2n)))
    & = \Pr\del{ \eigmin(\vK_{\lo i}) \leq \frac12\|\vlambda\|_1 } \\
    & \leq 2 \cdot 9^{n-1} \cdot \exp\del{ -c \cdot \min\cbr{ \frac{d_2}{4v^2} ,\, \frac{d_\infty}{v} } } .
  \end{align}
This completes the proof of the lemma.
\end{proof}

Plugging the probability bounds from \Cref{lem:prob-singular}, \Cref{lem:prob-B_i} and \Cref{lem:prob-E_i} (with $t=d_\infty/(2n)$) into \Cref{eq:prob-bad-events} completes the proof of \Cref{thm:independent}.
\qed

\subsection{Proof of \Cref{thm:haar}}

The proof follows a similar sequence of steps to that of~\Cref{thm:independent} with slight differences in the events that we condition on.
We first observe that $\frac{1}{\sqrt{d}} \vz_i \mid (\vZ_{\lo i}, \vy_{\lo i})$ is a uniformly random unit vector in $S^{d-1}$ restricted to the subspace orthogonal to the row space of $\vZ_{\lo i}$.
That is, it has the same (conditional) distribution as $\vB_i \vu_i$, where:
\begin{enumerate}
  \item $\vB_i$ is a $d \times (n-d+1)$ matrix whose columns form an orthonormal basis for the orthogonal complement of $\vZ_{\lo i}$'s row space;

  \item $\vu_i$ is a uniformly random unit vector in $S^{d-n}$.
\end{enumerate}

As before, for every $i \in [n]$, we define the event $\cB_i$ that $\vK_{\lo i}$ is non-singular and
\begin{align}
          y_i \vy_{\lo i}^\T \vK_{\lo i}^{-1} \vZ_{\lo i} \vLambda \vz_i & \geq 1 .
\end{align}

The Haar measure ensures that the matrices $\vZ$ and $\vZ_{\lo i}$ always have full row rank.
Therefore, because $\vLambda \succ \v0$, the matrices $\vK$ and $\vK_{\lo i}$ are always non-singular.
So we do not need to worry about singularity (c.f.~the events $\cS$ and $\cS_i$).
We accordingly consider the event $\cB := \bigcup_{i=1}^n \cB_i$.
As before, we also define the event $\cE_i(t)$ for every $i \in [n]$ and a given $t > 0$, that
\begin{align}
      \|\vB_i^\T \vLambda \vZ_{\lo i}^\T \vK_{\lo i}^{-1} \vy_{\lo i}\|_2^2 & \geq \frac{d - n + 1}{d} \cdot \frac1t .
\end{align}

By the union bound, we get
\begin{align}
 \Pr(\cB)
  & \leq \sum_{i=1}^n \Pr(\cB_i) \\
  & \leq \sum_{i=1}^n \Pr(\cB_i \mid \cE_i(t)^\compl) + \Pr(\cE_i(t)) ,
  \label{eq:haar-prob-bad-events}
\end{align}
and so we need to upper bound the probabilities $\Pr(\cB_i \mid \cE_i(t)^\compl)$ and $\Pr(\cE_i(t))$ for every $i \in [n]$.

The following lemma upper bounds $\Pr(\cB_i \mid \cE_i(t)^\compl)$, and is analogous to \Cref{lem:prob-B_i} in the proof of \Cref{thm:independent}.
\begin{lemma}\label{lem:prob-B_i_haar}
For any $t > 0$, we have
\begin{align}
    \Pr(\cB_i|\cE_i(t)^\compl) \leq 2 \exp\left(-t\right) .
\end{align}
\end{lemma}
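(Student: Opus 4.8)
The plan is to mirror the proof of \Cref{lem:prob-B_i}, replacing the sub-Gaussian tail bound with the appropriate concentration result for a fixed linear functional of a uniformly random unit vector on a sphere. Recall from the setup that, conditional on $(\vZ_{\lo i}, \vy_{\lo i})$, we may write $\frac{1}{\sqrt d}\vz_i = \vB_i \vu_i$ where $\vB_i \in \bbR^{d \times (d-n+1)}$ has orthonormal columns spanning the orthogonal complement of the row space of $\vZ_{\lo i}$, and $\vu_i$ is uniform on $S^{d-n}$. Substituting, the quantity controlling event $\cB_i$ becomes
\begin{align}
  y_i \vy_{\lo i}^\T \vK_{\lo i}^{-1} \vZ_{\lo i} \vLambda \vz_i
  &= \sqrt d \cdot y_i \, (\vB_i^\T \vLambda \vZ_{\lo i}^\T \vK_{\lo i}^{-1} \vy_{\lo i})^\T \vu_i .
\end{align}
So $\cB_i$ asks that a fixed vector $\vg_i := \sqrt d \cdot y_i \vB_i^\T \vLambda \vZ_{\lo i}^\T \vK_{\lo i}^{-1} \vy_{\lo i}$ (fixed after conditioning on $(\vZ_{\lo i},\vy_{\lo i})$, and with $|y_i|=1$ immaterial to its norm) has inner product at least $1$ with the uniform unit vector $\vu_i$.

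First I would introduce the symmetrized event $\cB_i'$ exactly as in the proof of \Cref{lem:prob-B_i}, so that it suffices to bound $\Pr(|\vg_i^\T \vu_i| \geq 1 \mid \cE_i(t)^\compl)$. Next, I would observe that conditional on $\cE_i(t)^\compl$ we have $\|\vB_i^\T \vLambda \vZ_{\lo i}^\T \vK_{\lo i}^{-1} \vy_{\lo i}\|_2^2 \leq \frac{d-n+1}{d} \cdot \frac1t$, hence $\|\vg_i\|_2^2 \leq (d-n+1)/t \leq (d-n+1) \cdot (\text{small})$; more importantly $\|\vg_i\|_2 \leq \sqrt{d(d-n+1)/(td)} \cdot \sqrt d = \sqrt{(d-n+1)/t}$. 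Then, since $\vu_i$ is uniform on $S^{d-n}$ (a sphere of ambient dimension $d-n+1$), the inner product $\vg_i^\T \vu_i$ with a \emph{fixed} vector of norm $\|\vg_i\|_2$ is distributed as $\|\vg_i\|_2 \cdot w_1$ where $w_1$ is the first coordinate of a uniform point on $S^{d-n}$. The key step is then to apply the standard concentration inequality for a single coordinate of a uniform unit vector in $\bbR^{m}$ (here $m = d-n+1$): $\Pr(|w_1| \geq s) \leq 2\exp(-m s^2 / 2)$, or whichever constant matches the paper's convention. Combining, $\Pr(|\vg_i^\T \vu_i| \geq 1 \mid \cE_i(t)^\compl) \leq 2\exp\!\big(-\tfrac{m}{2}\cdot \tfrac{1}{\|\vg_i\|_2^2}\big) \leq 2\exp\!\big(-\tfrac{d-n+1}{2} \cdot \tfrac{t}{d-n+1}\big) = 2\exp(-t/2)$; a factor-of-$2$ adjustment in the definition of $\cE_i(t)$ (or in the sphere concentration constant) would yield the stated $2\exp(-t)$. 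I would make sure the bookkeeping of the $\frac{d-n+1}{d}$ factor in the definition of $\cE_i(t)$ is consistent: that factor is exactly the ratio that converts $\|\frac1{\sqrt d}\vz_i\|_2^2 = 1$ back to the scaling of $\vz_i$, so after plugging in $\vg_i = \sqrt d (\cdots)$ the $d$'s cancel cleanly.

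The main obstacle, and the step requiring the most care, is getting the \emph{exact} constant in the sphere-concentration bound right so that the final exponent is $-t$ rather than $-t/2$ or $-t/4$. Unlike the sub-Gaussian case, where the variance-proxy bound is soft, here the tail $\Pr(|w_1| \geq s)$ for the first coordinate of a uniform vector on $S^{m-1}$ has the clean form $(1-s^2)^{(m-3)/2}$ (for $m \geq 3$), and bounding this by $\exp(-(m-3)s^2/2)$ versus matching it to $\exp(-m s^2/2)$ or absorbing lower-order terms into the $+C\log n$ slack of \Cref{thm:haar} is exactly where the choice of the $1/t$ (versus $1/(2t)$) normalization in $\cE_i(t)$ gets pinned down. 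I expect the cleanest route is to use the beta-distribution representation $w_1^2 \sim \mathrm{Beta}(1/2, (m-1)/2)$ and a direct tail estimate, then reconcile constants with the definition of $\cE_i(t)$ chosen elsewhere in the proof (the eventual choice of $t$ will be roughly $d_\infty/(2n)$, analogous to \Cref{lem:prob-E_i}). Everything else — the symmetrization, the conditioning decomposition via the law of total probability, the union bound over $i \in [n]$ — is identical in form to the independent-features proof and carries over verbatim.
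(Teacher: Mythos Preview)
Your proposal is correct and follows essentially the same approach as the paper: both rewrite $\vz_i$ as $\sqrt{d}\,\vB_i\vu_i$ with $\vu_i$ uniform on $S^{d-n}$, symmetrize to the two-sided event, and then apply concentration for $\langle \vg_i,\vu_i\rangle$ under the norm control provided by $\cE_i(t)^\compl$. The only cosmetic difference is packaging: the paper invokes \Cref{lem:randunit} (which states that a uniform unit vector in $\bbR^m$ is sub-Gaussian with parameter $O(1/m)$) and then the standard sub-Gaussian tail bound, whereas you propose the direct sphere/beta-distribution tail; these are the same inequality in different clothing, and your worry about the precise constant in the exponent is equally present in the paper's version (it is absorbed into the universal constants $c,C$ of \Cref{thm:haar}).
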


\begin{proof}
First, as discussed above, we have
\begin{align}
  \Pr\del{ y_i\vy_{\lo i}^\T \vK_{\lo i}^{-1} \vZ_{\lo i} \vLambda \vz_i \geq 1}
  & = \Pr\del{\sqrt{d} \cdot y_i\vy_{\lo i}^\T \vK_{\lo i}^{-1} \vZ_{\lo i} \vLambda \vB_i \vu_i \geq 1} \\
  & \leq \Pr\del{ \sqrt{d} \abs[1]{ (\vB_i^\T \vLambda \vZ_{\lo i}^\T \vK_{\lo i}^{-1} \vy_{\lo i})^\T \vu_i } \geq 1 } .
\end{align}
Moreover, $\vu_i$ is independent of $\vZ_{\lo i}$, and as established in \Cref{lem:randunit}, the random vector $\vu_i$ is sub-Gaussian with parameter at most $\mathcal{O}(1/(d - n + 1))$.
Therefore,  $\sqrt{d} \cdot (\vB_i^\T \vLambda \vZ_{\lo i}^\T \vK_{\lo i}^{-1} \vy_{\lo i})^\T \vu_i$ is conditionally sub-Gaussian with parameter at most $\frac{d}{d - n + 1} \cdot \|\vB_i^\T \vLambda \vZ_{\lo i}^\T \vK_{\lo i}^{-1} \vy_{\lo i}\|_2^2 \leq \frac{1}{t}$.
Here, the last inequality follows because we have conditioned on $\cE_i(t)^\compl$.
Therefore, the standard sub-Gaussian tail bound gives us
\begin{align}
  \Pr\del{ \cB_i \mid \cE_i(t)^\compl }
  & \leq 2\exp\del{-t} .  
  \qedhere
\end{align}
\end{proof} 

The next lemma upper bounds $\Pr\del{\cE_i(t)}$ for $t := \tfrac{d-n+1}{d} \cdot \tfrac{d_\infty}{2n}$, and is analogous to \Cref{lem:prob-E_i} in the proof of \Cref{thm:independent}.
\begin{lemma}\label{lem:prob-E_i_haar}
We have
\begin{align}
  \Pr\del{\cE_i\del{\frac{d-n+1}{d} \cdot \frac{d_\infty}{2n}}}
  & \leq \exp\del{-c_1 \cdot d} + 2 \cdot 9^n \cdot \exp\del{-c_2 \cdot \min\{d_2,d_\infty\}}
\end{align}
where $c_1>0$ and $c_2>0$ are universal constants.
\end{lemma}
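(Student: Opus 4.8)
The plan is to follow the template of the proof of \Cref{lem:prob-E_i}: first reduce $\cE_i(t)$ to a lower-tail event for $\eigmin(\vK_{\lo i})$ by purely deterministic manipulations, and then control that lower tail via a Gaussian representation of the scaled Haar rows together with \Cref{lem:randmatrix}.

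\emph{Deterministic reduction.} Since $\vB_i$ has orthonormal columns, $\|\vB_i^\T\vv\|_2\le\|\vv\|_2$ for every $\vv$, so on $\cE_i(t)$ we have $\|\vLambda\vZ_{\lo i}^\T\vK_{\lo i}^{-1}\vy_{\lo i}\|_2^2\ge\frac{d-n+1}{d}\cdot\frac1t$. Exactly as in the proof of \Cref{lem:prob-E_i}, pull $\|\vLambda^{1/2}\|_{\op}^2=\|\vlambda\|_\infty$ out of the left-hand side and use $\vy_{\lo i}^\T\vK_{\lo i}^{-1}\vy_{\lo i}\le n/\eigmin(\vK_{\lo i})$ to get $\|\vLambda\vZ_{\lo i}^\T\vK_{\lo i}^{-1}\vy_{\lo i}\|_2^2\le n\|\vlambda\|_\infty/\eigmin(\vK_{\lo i})$. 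Plugging in $t=\frac{d-n+1}{d}\cdot\frac{d_\infty}{2n}$ and using $\|\vlambda\|_\infty d_\infty=\|\vlambda\|_1$ collapses the right-hand side to $\frac{2n\|\vlambda\|_\infty}{\|\vlambda\|_1}$, showing $\cE_i(t)\subseteq\{\eigmin(\vK_{\lo i})\le\tfrac12\|\vlambda\|_1\}$. So it suffices to bound $\Pr(\eigmin(\vK_{\lo i})\le\tfrac12\|\vlambda\|_1)$.

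\emph{Gaussian coupling and concentration.} By the standard representation of Haar-distributed orthonormal frames, $\vZ_{\lo i}$ is distributed as $\sqrt d\,(\vG\vG^\T)^{-1/2}\vG$ for an $(n-1)\times d$ matrix $\vG$ with i.i.d.\ standard Gaussian entries; hence $\vK_{\lo i}$ is distributed as $d\,(\vG\vG^\T)^{-1/2}\vG\vLambda\vG^\T(\vG\vG^\T)^{-1/2}$, and a Rayleigh-quotient bound gives $\eigmin(\vK_{\lo i})\ge d\cdot\eigmin(\vG\vLambda\vG^\T)/\|\vG\|_{\op}^2$. The columns $\vg_1,\dotsc,\vg_d$ of $\vG$ are i.i.d.\ standard Gaussian vectors in $\bbR^{n-1}$, hence sub-Gaussian with parameter $O(1)$, so $\vG\vLambda\vG^\T=\sum_{j=1}^d\lambda_j\vg_j\vg_j^\T$ concentrates around $\|\vlambda\|_1\vI$: \Cref{lem:randmatrix} with deviation a small constant multiple of $\|\vlambda\|_1$ yields $\eigmin(\vG\vLambda\vG^\T)\ge(1-\delta)\|\vlambda\|_1$ except with probability at most $2\cdot 9^{n}\exp(-c_2\min\{d_2,d_\infty\})$. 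Separately, the standard Gaussian operator-norm tail bound gives $\|\vG\|_{\op}\le\sqrt d+\sqrt{n-1}+\alpha\sqrt d$ except with probability $\exp(-c_1 d)$. On the intersection of these two events, $\eigmin(\vK_{\lo i})\ge(1-\delta)\|\vlambda\|_1/(1+\alpha+\sqrt{(n-1)/d})^2$, which exceeds $\tfrac12\|\vlambda\|_1$ once the absolute constants $\alpha,\delta$ and a threshold $C_0$ are chosen so that $(1+\alpha+\sqrt{(n-1)/d})^2<2(1-\delta)$ whenever $d\ge C_0 n$. In the complementary regime $d<C_0 n$ we have $\min\{d_2,d_\infty\}=d_\infty\le d<C_0 n$, so, taking $c_2\le(\ln 9)/C_0$, the claimed bound already exceeds $1$ and the lemma is vacuous. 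A union bound over the two bad events in the regime $d \ge C_0 n$, combined with this triviality, then gives the stated estimate.

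\emph{Main obstacle.} The only delicate point is that the Rayleigh-quotient step discards the factor $\|\vG\|_{\op}^2/d$, which lies in $[1,(1+\sqrt{n/d})^2]$ and is bounded by a constant strictly below $2$ only when $d$ exceeds a fixed multiple of $n$; consequently the constants produced in the deterministic reduction and in the two concentration inputs must be tracked and reconciled, and the complementary regime $d=O(n)$ has to be handled separately by the triviality observation. Everything else parallels the independent-features proof.
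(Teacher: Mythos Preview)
Your proof is correct and follows essentially the same route as the paper: both reduce $\cE_i(t)$ to the event $\eigmin(\vK_{\lo i})\le\tfrac12\|\vlambda\|_1$ via the same deterministic chain, represent the scaled Haar rows through a Gaussian matrix, and then combine a Gaussian operator-norm bound with \Cref{lem:randmatrix}. The paper writes the coupling via the SVD $\vS=\vV\vLambda_S\vU^\T$ and bounds $\eigmin(\vK_{\lo i})\ge (d/\|\vLambda_S\|_{\op}^2)\eigmin(\vS\vLambda\vS^\T)$, which is exactly your inequality $\eigmin(\vK_{\lo i})\ge d\,\eigmin(\vG\vLambda\vG^\T)/\|\vG\|_{\op}^2$ in different notation.

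The one place you are more explicit than the paper is in tracking the constants: the paper simply asserts $\|\vLambda_S\|_{\op}^2\le\tfrac32 d$ with the stated probability, which as you notice cannot hold uniformly over all $n\le d$ (when $n$ is comparable to $d$ the typical value of $\|\vG\|_{\op}^2/d$ exceeds $3/2$). Your observation that the claimed bound is vacuously at least $1$ whenever $d<C_0 n$ (since $\min\{d_2,d_\infty\}=d_\infty\le d$ and one may take $c_2\le(\ln 9)/C_0$) cleanly disposes of this regime, and in the complementary regime $d\ge C_0 n$ the constants can indeed be chosen consistently. This is a genuine clarification of a point the paper leaves implicit.
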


\begin{proof}
We get
\begin{align}
  \|\vB_i^\T \vLambda \vZ_{\lo i}^\T \vK_{\lo i}^{-1} \vy_{\lo i}\|_2^2
  & \leq \|\vB_i^\T\|_2^2 \cdot \|\vLambda \vZ_{\lo i}^\T \vK_{\lo i}^{-1} \vy_{\lo i}\|_2^2 \\
  & = \|\vLambda \vZ_{\lo i}^\T \vK_{\lo i}^{-1} \vy_{\lo i}\|_2^2 \\
  & \leq  \frac{n\|\vlambda\|_\infty}{\eigmin(\vK_{\lo i})} ,
\end{align}
where we used the fact that $\vB_i$ has orthonormal columns, and the last inequality follows by an identical argument to the proof of \Cref{lem:prob-E_i}.
We will show in particular that 
\begin{align}\label{eq:mineigenvaluehaar}
  \Pr\del{ \eigmin(\vK_{\lo i}) \geq \frac12 \|\vlambda \|_1 }
  & \geq 1-\exp(-c_1 \cdot d) - 2 \cdot 9^{n-1} \cdot \exp(-c_2 \cdot \min\{d_2,d_\infty\}) .
\end{align}
Given \Cref{eq:mineigenvaluehaar}, we can complete the proof of \Cref{lem:prob-E_i_haar}.
This is because we get 
\begin{align}
  \|\vB_i^\T \vLambda \vZ_{\lo i}^\T \vK_{\lo i}^{-1} \vy_{\lo i}\|_2^2
  & \leq \frac{2n\|\vlambda\|_{\infty}}{\|\vlambda\|_{1}}
  = \frac{2n}{d_\infty}
  = \frac{d-n+1}{d} \cdot \frac1t
\end{align}
for
\begin{align}
  t & := \frac{d-n+1}{d} \cdot \frac{d_\infty}{2n} .
\end{align}

We complete the proof by proving \Cref{eq:mineigenvaluehaar}.
Let $\vS \in \bbR^{m \times d}$ be a random matrix with iid standard Gaussian entries with $m := n-1$, and let the singular value decomposition of $\vS$ be $\vS = \vV \vLambda_S \vU^\T$ where $\vV \in \bbR^{m \times m}$ and $\vU \in \bbR^{d \times m}$ are orthonormal matrices.
Then, it is well-known that $\sqrt{d} \cdot \vU^\T$ follows the same distribution as $\vZ_{\lo i}$, and hence $\eigmin(\vK_{\lo i})$ has the same distribution as $d \cdot \eigmin(\vU^T \vLambda \vU)$.
Moreover,
\begin{align}
  d \cdot \eigmin(\vU^\T \vLambda \vU)
  &= \min_{\vv \in \bbR^n, \|\vv\|_2 = 1} \vv^\T \vLambda_S^{-1} \vV^\T \vV \vLambda_S \vU^\T \vLambda \vU \vLambda_S \vV^\T \vV \vLambda_S^{-1} \vv \\
  & \geq \frac{d}{\|\vLambda_S\|_{\op}^2} \cdot \min_{\vv \in \bbR^n, \|\vv\|_2 = 1} \vv^\T \vS \vLambda \vS^\T \vv \\
  & = \frac{d}{\|\vLambda_S\|_{\op}^2} \cdot \eigmin(\vS \vLambda \vS^\T) .
\end{align}
By classical operator norm tail bounds on Gaussian random matrices~\citep[e.g.,][Corollary~5.35]{vershynin2010introduction}, we note that $\|\vLambda_S\|_2^2 \leq \tfrac32 d$ with probability at least $1 - \exp(-c_1 \cdot d)$.
Now, we note that the matrix $\vS \vLambda \vS^\T := \sum_{j=1}^d \lambda_j \vs_j \vs_j^\T$ where the $\vs_j$'s are iid standard Gaussian random vectors in $\bbR^n$. 
So, we directly substitute \Cref{lem:randmatrix} with $\tau := \tfrac14 \|\vlambda\|_1$, and get $\eigmin(\vS \vLambda \vS^\T) \geq \tfrac34 \|\vlambda\|_1$ with probability at least $1 - 2 \cdot 9^m \cdot \exp(-c_2 \cdot \min\{d_2, d_{\infty}\})$.
Putting both of these inequalities together directly gives us \Cref{eq:mineigenvaluehaar} with the desired probability bound, and completes the proof.
\end{proof}

Finally, putting the high probability statements of \Cref{lem:prob-B_i_haar} and~\Cref{lem:prob-E_i_haar} together completes the proof of \Cref{thm:haar}.

\subsection{Proof of \Cref{thm:converse}}

By \Cref{lem:equivalent}, our task is equivalent to lower-bounding the probability that there exists $i \in [n]$ such that $y_i\vy_{\lo{i}}^{\T}\del[0]{\vZ_{\lo{i}}\vZ_{\lo{i}}^{\T}}^{-1}\vZ_{\lo{i}}\vz_{i} \geq 1$.
This event is the union of $n$ (possibly overlapping) events, and hence its probability is at least the probability of one of the events, say, the first one:
\begin{align}
  \Pr\del{ \exists i \in [n] \ \text{s.t.}\ y_i\vy_{\lo i}^\T \vK_{\lo i}^{-1} \vZ_{\lo i} \vz_i \geq 1 }
  & \geq
  \Pr\del{ y_1\vy_{\lo 1}^\T \vK_{\lo 1}^{-1} \vZ_{\lo 1} \vz_1 \geq 1 } .
  \label{eq:iso-main-prob-to-lb1}
\end{align}
Because $\vz_1$ is a standard Gaussian random vector independent of $\vZ_{\lo 1}$, the conditional distribution of $y_1 \vy_{\lo 1}^\T \vK_{\lo 1}^{-1} \vZ_{\lo 1} \vz_1 \mid \vZ_{\lo 1}$ is Gaussian with mean zero and variance $\sigma^2 := \|\vZ_{\lo 1}^\T \vK_{\lo 1}^{-1} \vy_{\lo1 }\|_2^2$.
Therefore, for any $t>0$, we have
\begin{align}
  \Pr\del{ y_1\vy_{\lo 1}^\T \vK_{\lo 1}^{-1} \vZ_{\lo 1} \vz_1 \geq 1 }
  & = \bbE\sbr{ \Pr\del{ \sigma g \geq 1 \mid \sigma } } \quad \text{(where $g \sim \Normal(0,1)$, $g \independent \sigma$)} \\
  & = \bbE\sbr{ \Phi\del{ -1/\sigma } } \\
  & \geq \bbE\sbr{ \Phi\del{ -1/\sigma } \mid \sigma^2 \geq 1/t } \Pr\del{ \sigma^2 \geq 1/t } \\
  & \geq \Phi(-\sqrt{t}) \cdot \Pr(\cE_1(t)) ,
  \label{eq:iso-main-prob-to-lb2}
\end{align}
where $\Phi$ is the standard Gaussian cumulative distribution function, and $\cE_1(t)$ is the event that
\begin{align}
  \sigma^2 = \vy_{\lo 1} \vK_{\lo 1}^{-1} \vZ_{\lo 1} \vZ_{\lo 1}^\T \vK_{\lo 1}^{-1} \vy_{\lo 1} = \vy_{\lo 1} \vK_{\lo 1}^{-1} \vy_{\lo 1} & \geq \frac1t
\end{align}
(as in the proofs of \Cref{thm:independent} and \Cref{thm:haar}).
We now lower-bound the probability of $\cE_1(t)$.
Observe that the $(n-1) \times (n-1)$ random matrix $\vK_{\lo 1} = \vZ_{\lo 1} \vZ_{\lo 1}^\T$ follows a Wishart distribution with identity scale matrix and $d$ degrees-of-freedom.
Moreover, by the rotational symmetry of the standard Gaussian distribution, 
the random variable $\vy_{\lo 1}^\T \vK_{\lo 1}^{-1} \vy_{\lo 1}$ has the same distribution as that of $(\sqrt{n-1} \ve_1)^\T \vK_{\lo 1}^{-1} (\sqrt{n-1} \ve_1) = (n-1) \ve_1^\T \vK_{\lo 1}^{-1} \ve_1$.
It is known that $1 / \ve_1^\T \vK_{\lo 1}^{-1} \ve_1$ follows a $\chi^2$ distribution with $d - (n-2)$ degrees-of-freedom; we denote its cumulative distribution function by $F_{d-n+2}$.
Therefore,
\begin{align}
  \Pr(\cE_1(t)) & = F_{d-n+2}(t(n-1)) .
\end{align}
So, we have shown that
\begin{align}
  \Pr\del{ y_1\vy_{\lo 1}^\T \vK_{\lo 1}^{-1} \vZ_{\lo 1} \vz_1 \geq 1 }
  & \geq \sup_{t\geq0} \Phi(-\sqrt{t}) \cdot F_{d-n+2}(t(n-1)) .
\end{align}
For $t := \tfrac{d-n+4+2\sqrt{d-n+2}}{n-1}$, we obtain $F_{d-n+2}(t) \geq 1-1/e$ by a standard $\chi^2$ tail bound~\citep[Lemma 1]{laurent2000adaptive}.
In this case, we obtain
\begin{align}
  \Pr\del{ y_1\vy_{\lo 1}^\T \vK_{\lo 1}^{-1} \vZ_{\lo 1} \vz_1 \geq 1 }
  & \geq
  \Phi\del{ -\sqrt{\frac{d-n+4+2\sqrt{d-n+2}}{n-1}} } \cdot \del{ 1 - \frac1e }
  \label{eq:converse-final-bound}
\end{align}
as claimed.
\qed

\subsubsection*{Acknowledgements}

This project resulted from a collaboration initiated during the ``Foundations of Deep Learning'' program at the Simons Institute for the Theory of Computing, and we are grateful to the Institute and organizers for their hospitality and support of such collaborative research.
We thank Clayton Sanford for his careful reading and comments on this paper.
DH acknowledges partial support from NSF awards CCF-1740833 and IIS-1815697, a Sloan Research Fellowship, and a Google Faculty Award.
VM acknowledges partial support from a Simons-Berkeley Research Fellowship, support of the ML4Wireless center member companies and NSF grants AST-144078 and ECCS-1343398.
JX was supported by a Cheung-Kong Graduate School of Business Fellowship as a Ph.D.~student at Columbia University during this project.

\bibliographystyle{plainnat}
\bibliography{refs}

\appendix

\section{Anisotropic version of \Cref{thm:converse}}
\label{sec:anisotropic}

Below, we give a version of \Cref{thm:converse} that applies to certain anisotropic settings, depending on some conditions on $\vlambda$.

\begin{theorem} \label{thm:anisotropic-converse}
  There are absolute constants $c>0$ and $c'>0$ such that the following hold.
  Let the training data $(\vx_1,y_1),\dotsc,(\vx_n,y_n)$ follow the model from \Cref{sec:data}, with $\vz_1,\dotsc,\vz_n$ being iid standard Gaussian random vectors in $\bbR^d$, and $y_1,\dotsc,y_n \in \{\pm1\}$ being arbitrary but fixed (i.e., non-random) values.
  Assume $d>n$ and that there exists $k \in \bbN$ and $b>1$ such that $k < (n-1)/c$ and
  \begin{align}
    \frac{\sum_{j=k+1}^d \lambda_j}{\lambda_{k+1}} \leq b(n-1)
  \end{align}
  where $\lambda_1 \geq \lambda_2 \geq \dotsb \geq \lambda_d$.
  Then the probability that at least one training example is not a support vector is at least
  \begin{equation}
    c' \cdot \Phi\del{-\sqrt{\frac{2cb^2(n-1)}{k+1}}} \cdot \del{ 1 - 10e^{-(n-1)/c} } ,
  \end{equation}
  where $\Phi$ is the standard Gaussian cumulative distribution function.
\end{theorem}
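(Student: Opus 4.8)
The plan is to imitate the three-step structure of the proof of \Cref{thm:converse}---reduce to a single leave-one-out event via \Cref{lem:equivalent}, condition on $\vZ_{\lo 1}$ to expose a Gaussian, and then lower-bound the resulting conditional variance---except that in the anisotropic setting the last step can no longer be carried out by the exact $\chi^2$ computation that was available for the Wishart matrix $\vZ_{\lo 1}\vZ_{\lo 1}^\T$; instead one feeds the hypotheses on $\vlambda$ into the effective-rank and eigenvalue estimates of \citet{bartlett2020benign}.

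\textbf{Step 1 (reduction to a variance lower bound).} By \Cref{lem:equivalent}, the probability that at least one training example is not a support vector equals $\Pr\del{\exists\, i\in[n]:\ y_i\vy_{\lo i}^\T\vK_{\lo i}^{-1}\vZ_{\lo i}\vLambda\vz_i\ge 1}$, and this is at least the probability of the single event for $i=1$. Because $\vz_1\sim\Normal(\v0,\vI_d)$ is independent of $\vZ_{\lo 1}$, the conditional law of $y_1\vy_{\lo 1}^\T\vK_{\lo 1}^{-1}\vZ_{\lo 1}\vLambda\vz_1$ given $\vZ_{\lo 1}$ is $\Normal(0,\sigma^2)$ with $\sigma^2:=\|\vLambda\vZ_{\lo 1}^\T\vK_{\lo 1}^{-1}\vy_{\lo 1}\|_2^2=\vs^\T\vLambda\vs$, where $\vs:=\vLambda^{1/2}\vZ_{\lo 1}^\T\vK_{\lo 1}^{-1}\vy_{\lo 1}$ is the minimum-norm interpolant of the left-out data $(\vZ_{\lo 1}\vLambda^{1/2},\vy_{\lo 1})$ (so $\sigma^2$ is its out-of-sample prediction second moment). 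Exactly as in \Cref{thm:converse}, this gives, for every $t>0$,
\[
  \Pr(\text{at least one training example is not a support vector})\ \ge\ \Phi(-\sqrt t)\cdot\Pr\del{\sigma^2\ge 1/t}.
\]

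\textbf{Step 2 (lower bound on $\sigma^2$).} This is the heart of the proof and the place where the hypotheses $k<(n-1)/c$ and $\sum_{j>k}\lambda_j\le b(n-1)\lambda_{k+1}$ enter. A natural route: split $\vLambda$ into its top-$k$ block $\vLambda_{\le k}$ and its tail $\vLambda_{>k}$, write $\vK_{\lo 1}=\vZ_{\lo 1}\vLambda_{\le k}\vZ_{\lo 1}^\T+\vZ_{\lo 1}\vLambda_{>k}\vZ_{\lo 1}^\T$, and bound $\sigma^2=\vs^\T\vLambda\vs$ from below by retaining only the top-$k$ coordinates of $\vs$, where $\lambda_j\ge\lambda_{k+1}$; this yields $\sigma^2\ge\lambda_{k+1}\,\vy_{\lo 1}^\T\vK_{\lo 1}^{-1}\del{\vK_{\lo 1}-\vZ_{\lo 1}\vLambda_{>k}\vZ_{\lo 1}^\T}\vK_{\lo 1}^{-1}\vy_{\lo 1}$. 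Using the rotational invariance of the Gaussian $\vZ_{\lo 1}$ to replace $\vy_{\lo 1}$ by a canonical vector (as in \Cref{thm:converse}), and applying the \citet{bartlett2020benign} control on the spectrum of $\vZ_{\lo 1}\vLambda\vZ_{\lo 1}^\T$ and on projections of the form $\vLambda^{1/2}(\vI-\vP)\vLambda^{1/2}$ (with $\vP$ the projector onto the row space of the remaining rows), together with a Laurent--Massart bound on the remaining $\chi^2$-type randomness, one should obtain $\sigma^2\ge (k+1)/(2cb^2(n-1))$ on an event of probability at least $1-10e^{-(n-1)/c}$ (the union of the $\chi^2$ concentration bound and the exponentially likely events underlying the Bartlett et al.\ estimates), up to a constant-probability factor $c'$ arising from a $\chi^2$ lower-tail estimate---the analogue of the $1-1/e$ term in \Cref{thm:converse}. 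In other words, $\Pr\del{\sigma^2\ge (k+1)/(2cb^2(n-1))}\ge c'\del{1-10e^{-(n-1)/c}}$.

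\textbf{Step 3 (conclusion and main obstacle).} Choosing $t:=2cb^2(n-1)/(k+1)$ in Step 1 and combining with Step 2 gives the claimed lower bound $c'\,\Phi\del{-\sqrt{2cb^2(n-1)/(k+1)}}\del{1-10e^{-(n-1)/c}}$. The main obstacle is entirely in Step 2: since $\vK_{\lo 1}=\vZ_{\lo 1}\vLambda\vZ_{\lo 1}^\T$ is a $\vlambda$-weighted Wishart matrix with no closed-form spectral distribution, and $\sigma^2$ carries an extra factor of $\vLambda$ compared with the isotropic quantity $\vy_{\lo 1}^\T\vK_{\lo 1}^{-1}\vy_{\lo 1}$, one must quantitatively control how the random $(n-1)$-dimensional row space of $\vZ_{\lo 1}$ aligns with the eigenbasis of $\vLambda$---separating the ``head'' eigendirections that this subspace nearly absorbs from the ``tail''---and it is precisely this separation, and the constraints linking $k$, $b$ and $n$, that the estimates of \citet{bartlett2020benign} are designed to deliver.
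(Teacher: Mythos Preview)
Your Steps~1 and~3 match the paper exactly: the reduction via \Cref{lem:equivalent} to the single leave-one-out event for $i=1$, the conditioning on $\vZ_{\lo 1}$ to expose a centered Gaussian with variance $\sigma^2=\|\vLambda\vZ_{\lo 1}^\T\vK_{\lo 1}^{-1}\vy_{\lo 1}\|_2^2$, and the final choice $t=2cb^2(n-1)/(k+1)$ are all what the paper does.

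The substantive divergence is in Step~2, and here the paper's route is both different and considerably simpler than your plan. The paper performs \emph{no} head/tail decomposition of $\vLambda$, and it does \emph{not} replace $\vy_{\lo 1}$ by a canonical vector. Instead, it uses left-rotational invariance of $\vZ_{\lo 1}$ to replace $\vy_{\lo 1}$ by $\sqrt{n-1}\,\vu$ with $\vu$ a \emph{uniformly random} unit vector independent of $\vZ_{\lo 1}$, so that $\sigma^2$ has the same law as $(n-1)\|\vM\vu\|_2^2$ with $\vM:=\vLambda\vZ_{\lo 1}^\T\vK_{\lo 1}^{-1}$. Conditioning on $\vZ_{\lo 1}$, a Paley--Zygmund argument together with the fourth-moment bound for random unit vectors (\Cref{lem:randunit}) gives $\Pr\bigl((n-1)\|\vM\vu\|_2^2>\tfrac12\tr{\vM^\T\vM}\,\big|\,\vZ_{\lo 1}\bigr)\geq 1/(4C)$; \emph{this} is the origin of the constant $c'$, not a $\chi^2$ lower-tail estimate. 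The remaining task is a lower bound on $\tr{\vM^\T\vM}$, and this quantity is exactly the trace of the matrix called $\vC$ in \citet{bartlett2020benign}; their Lemma~16 gives $\tr{\vM^\T\vM}\geq (k+1)/(cb^2(n-1))$ with probability at least $1-10e^{-(n-1)/c}$ under precisely the effective-rank hypothesis $r_k(\vLambda)\leq b(n-1)$ you stated. No spectral splitting or projector analysis is needed.

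Your proposed path is not obviously wrong, but it carries a real risk that you have underestimated. After fixing $\vy_{\lo 1}=\sqrt{n-1}\,\ve_1$, the anisotropic case has no $\chi^2$ structure to exploit: unlike the isotropic proof, where $1/\ve_1^\T\vK_{\lo 1}^{-1}\ve_1$ is exactly $\chi^2_{d-n+2}$, the quantity $\ve_1^\T\vK_{\lo 1}^{-1}\vZ_{\lo 1}\vLambda_{\leq k}\vZ_{\lo 1}^\T\vK_{\lo 1}^{-1}\ve_1$ has no closed form, and the tools you list (operator-norm control of $\vK_{\lo 1}$, projector estimates, Laurent--Massart) are upper-tail or two-sided concentration results that do not readily produce a \emph{lower} bound on a single coordinate of an inverse weighted-Wishart quadratic form with constant probability. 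The random-unit-vector plus Paley--Zygmund trick is precisely the device that converts this one-direction lower bound into a trace lower bound, which is what the Bartlett et~al.\ lemma is stated for.
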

Note that the probability bound in \Cref{thm:anisotropic-converse} is at least a positive constant for sufficiently large $n$ provided that the $(k,b)$ obtained as a function of $\vlambda$ satisfy $k+1 \geq c'' b^2(n-1)$ for some absolute constant $c''>0$.

\begin{proof}
  The proof begins in the same way as in that of \Cref{thm:converse}.
  Using the same arguments, we obtain the following lower bound:
  \begin{align}
    \Pr\del{ \exists i \in [n] \ \text{s.t.}\ y_i\vy_{\lo i}^\T \vK_{\lo i}^{-1} \vZ_{\lo i} \vLambda \vz_i \geq 1 }
    & \geq \Pr\del{ y_1\vy_{\lo 1}^\T \vK_{\lo 1}^{-1} \vZ_{\lo 1} \vLambda \vz_1 \geq 1 } \\
    & \geq \Phi(-\sqrt{t}) \cdot \Pr(\cE_1(t))
    \label{eq:main-prob-to-lb}
  \end{align}
  where $\cE_1(t)$ is the event that
  \begin{align}
    \norm{ \vLambda \vZ_{\lo 1}^\T \vK_{\lo 1}^{-1} \vy_{\lo 1} }_2^2 & \geq \frac1t .
  \end{align}

  We next focus on lower-bounding the probability of $\cE_1(t)$.
  (This part is more involved than in the proof of \Cref{thm:converse}.)
  Observe that the (rotationally invariant) distribution of $\vZ_{\lo 1}$ is the same as that of $\vQ\vZ_{\lo 1}$, where $\vQ$ is a uniformly random $(n-1) \times (n-1)$ orthogonal matrix independent of $\vZ_{\lo 1}$.
  Therefore, $\vLambda \vZ_{\lo 1}^\T \vK^{-1} \vy_{\lo 1}$ has the same distribution as
  \begin{align}
    \vLambda (\vQ\vZ_{\lo 1})^\T (\vQ\vZ_{\lo 1}\vLambda\vZ_{\lo 1}^\T\vQ^\T)^{-1} \vy_{\lo 1}
    & = \vLambda \vZ_{\lo 1}^\T \vQ^\T \vQ(\vZ_{\lo 1}\vLambda\vZ_{\lo 1}^\T)^{-1}\vQ^\T \vy_{\lo 1} \\
    & = \sqrt{n-1} \vLambda \vZ_{\lo 1}^\T \vK_{\lo 1}^{-1} \vu
  \end{align}
  where $\vu := \vQ^\T \vy_{\lo 1} / \sqrt{n-1}$ is a uniformly random unit vector, independent of $\vZ_{\lo 1}$.
  Letting $\vM := \vLambda \vZ_{\lo 1}^\T \vK_{\lo 1}^{-1}$, we can thus lower-bound the probability of $\cE_1(t)$ using
  \begin{align}
    \Pr(\cE_1(t))
    & = \Pr\del{ \|\sqrt{n-1} \vM \vu\|_2^2 > 1/t } \\
    & \geq \Pr\del{ \|\sqrt{n-1} \vM \vu\|_2^2 > 1/t \mid \tr{\vM^\T\vM} \geq 2/t } \cdot \Pr\del{ \tr{\vM^\T\vM} \geq 2/t } .
    \label{eq:two-probs-to-lb}
  \end{align}
  We lower-bound each of the probabilities on the right-hand side of \Cref{eq:two-probs-to-lb}.

  We begin with the first probability in \Cref{eq:two-probs-to-lb}, which we handle for arbitrary $t>0$.
  By the Paley-Zygmund inequality, we have
  \begin{align}
    \label{eq:paley-zygmund}
    \Pr\del{ \|\sqrt{n-1} \vM \vu\|_2^2 > \frac12 \bbE\sbr{\|\sqrt{n-1} \vM \vu\|_2^2} \mid \vZ_{\lo 1} }
    & \geq \frac14 \cdot \frac{\bbE\sbr{\|\sqrt{n-1} \vM \vu\|_2^2}^2}{\bbE\sbr{\|\sqrt{n-1} \vM \vu\|_2^4}} .
  \end{align}
  Since $\sqrt{n-1} \vu$ is isotropic, we have
  \begin{align}
    \bbE\sbr{ \|\sqrt{n-1}\vM\vu\|_2^2 \mid \vZ_{\lo 1} } & = (n-1) \tr{\vM^\T\vM \bbE\sbr{ \vu \vu^\T }} = \tr{\vM^\T\vM} .
  \end{align}
  Furthermore, by \Cref{lem:randunit},
  \begin{align}
    \bbE\sbr{ \|\sqrt{n-1}\vM\vu\|_2^4 \mid \vZ_{\lo 1} }
    & \leq C\tr{\vM^\T\vM}^2
  \end{align}
  for some universal constant $C>0$.
  Therefore, plugging back into \Cref{eq:paley-zygmund}, we obtain
  \begin{align}
    \Pr\del{ \|\sqrt{n-1} \vM \vu\|_2^2 > \frac12 \tr{\vM^\T\vM} \mid \vZ_{\lo 1} }
    & \geq \frac14 \cdot \frac{\tr{\vM^\T\vM}^2}{C\tr{\vM^\T\vM}^2}
    = \frac{1}{4C} .
  \end{align}
  Thus we also have the following for arbitrary $t>0$:
  \begin{align}
    \Pr\del{ \|\sqrt{n-1} \vM \vu\|_2^2 > 1/t \mid \tr{\vM^\T\vM} \geq 2/t }
    & \geq \frac1{4C} .
    \label{eq:prob-lb1}
  \end{align}

  We next consider the second probability in \Cref{eq:two-probs-to-lb}, namely $\Pr\del[0]{ \tr{\vM^\T\vM} \geq 2/t }$.
  Recall that we assume there exists $k < (n-1)/c$ and $b>1$ such that
  \begin{align}
    \label{eq:r_k-bound}
    \frac{\sum_{j=k+1}^d \lambda_j}{\lambda_{k+1}} \leq b(n-1) .
  \end{align}
  We claim that for $t := \tfrac{2cb^2(n-1)}{k+1}$,
  \begin{align}
    \Pr\del{ \tr{\vM^\T\vM} \geq \frac2t } & \geq 1 - 10e^{-(n-1)/c} .
    \label{eq:prob-lb2}
  \end{align}
  Indeed, this claim follows from Lemma~16 of \citep{bartlett2020benign}, where their matrix $\vC$ is our matrix $\vM^\T\vM$, except our matrix is $(n-1)\times(n-1)$ instead of $n \times n$, and their matrix $\vSigma$ is our matrix $\vLambda$; see the definitions in their Lemma~8.
  The universal constant $c>0$ in their lemma is the same as ours, and \Cref{eq:r_k-bound} is precisely their condition $r_k(\vSigma) < b(n-1)$ (with the same $k$ and $b$).
  Therefore, the conclusion of their lemma implies, in our notation, that with probability at least $1-10e^{-(n-1)/c}$,
  \begin{align}
    \tr{\vM^\T\vM} & \geq \frac{k+1}{cb^2(n-1)} = \frac2t .
  \end{align}
  This proves the claimed probability bound.

  We conclude from \Cref{eq:main-prob-to-lb}, \Cref{eq:two-probs-to-lb}, \Cref{eq:prob-lb1}, and \Cref{eq:prob-lb2}, that the probability that at least one training example is not a support vector is bounded below by
  \begin{align}
    \Phi\del{ -\sqrt{\tfrac{2cb^2(n-1)}{k+1}} } \cdot \frac1{4C} \cdot \del{ 1 - 10e^{-(n-1)/c} }
  \end{align}
  as claimed.
\end{proof}

\section{Probabilistic inequalities}
\label{sec:prob}

\begin{lemma}
  \label{lemma:net}
  Let $\vM \in \bbR^{n \times n}$ be a symmetric matrix, and let $\cN$ be an $\epsilon$-net of $S^{n-1}$ with respect to the Euclidean metric for some $\epsilon < 1/2$,
  Then
  \begin{align}
    \|\vM\|_2 & \leq \frac1{1-2\epsilon} \max_{\vu \in \cN} |\vu^\T \vM \vu| .
  \end{align}
\end{lemma}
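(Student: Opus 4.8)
The plan is to reduce the operator-norm bound to a Rayleigh-quotient bound on the net by a short perturbation argument. First I would invoke the variational characterization of the operator norm of a symmetric matrix, $\|\vM\|_2 = \sup_{\vx \in S^{n-1}} |\vx^\T \vM \vx|$, which follows from the spectral theorem (the largest-magnitude eigenvalue of a symmetric matrix equals the largest-magnitude Rayleigh quotient). Since $S^{n-1}$ is compact and $\vx \mapsto \vx^\T \vM \vx$ is continuous, this supremum is attained at some unit vector $\vx^\star$ with $|(\vx^\star)^\T \vM \vx^\star| = \|\vM\|_2$; alternatively, to avoid invoking compactness, one fixes $\delta > 0$, chooses $\vx$ with $|\vx^\T \vM \vx| \ge \|\vM\|_2 - \delta$, runs the remainder of the argument with $\vx$ in place of $\vx^\star$, and lets $\delta \to 0$ at the end.

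Next I would use the net property to pick $\vu \in \cN$ with $\|\vx^\star - \vu\|_2 \le \epsilon$, and compare the two quadratic forms via the decomposition $(\vx^\star)^\T \vM \vx^\star - \vu^\T \vM \vu = (\vx^\star - \vu)^\T \vM \vx^\star + \vu^\T \vM (\vx^\star - \vu)$. Applying Cauchy--Schwarz to each term and using $\|\vM \vx^\star\|_2 \le \|\vM\|_2$, $\|\vM \vu\|_2 \le \|\vM\|_2$ (valid because $\vx^\star$ and $\vu$ are unit vectors), together with $\|\vx^\star - \vu\|_2 \le \epsilon$, gives $|(\vx^\star)^\T \vM \vx^\star - \vu^\T \vM \vu| \le 2\epsilon \|\vM\|_2$.

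Chaining these estimates yields $\|\vM\|_2 = |(\vx^\star)^\T \vM \vx^\star| \le |\vu^\T \vM \vu| + 2\epsilon \|\vM\|_2 \le \max_{\vu \in \cN} |\vu^\T \vM \vu| + 2\epsilon \|\vM\|_2$. Since $\epsilon < 1/2$, the quantity $1 - 2\epsilon$ is positive, so I can move the term $2\epsilon \|\vM\|_2$ to the left-hand side and divide, obtaining $\|\vM\|_2 \le \frac{1}{1-2\epsilon} \max_{\vu \in \cN} |\vu^\T \vM \vu|$, as claimed.

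I do not expect a substantial obstacle: this is a textbook $\epsilon$-net argument (see, e.g., \citet{vershynin2010introduction}). The only point that needs a line of care is the symmetry of $\vM$, which is what lets us bound $\|\vM\|_2$ by the quadratic forms $\vu^\T \vM \vu$ rather than bilinear forms $\vu^\T \vM \vv$, and which also means the one-sided bound $\|\vM \vu\|_2 \le \|\vM\|_2 \|\vu\|_2$ suffices (no separate bookkeeping for $\vM^\T$ is needed). Everything else is Cauchy--Schwarz and elementary rearrangement.
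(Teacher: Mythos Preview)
Your argument is correct and is precisely the standard $\epsilon$-net argument; the paper itself does not spell out a proof but simply cites \citep[Lemma~5.4]{vershynin2010introduction}, which is exactly the reference you invoke and whose proof proceeds along the same lines you describe.
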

\begin{proof}
  See \citep[Lemma 5.4]{vershynin2010introduction}.
\end{proof}

\begin{lemma}
  \label{lem:randmatrix}
  There is a universal constant $c>0$ such that the following holds.
  Let $\lambda_1, \dotsc, \lambda_d > 0$ be given.
  Let $\vv_1, \dotsc, \vv_d$ be independent random vectors taking values in $\bbR^n$ such that, for some $v>0$,
  \begin{align}
    \bbE(\vv_j) & = \v0 , & \bbE(\vv_j\vv_j^\T) & = \vI_n , & \bbE(\exp(\vu^\T \vv_j)) & \leq \exp(v \|\vu\|_2^2/2) \quad \text{for all $\vu \in \bbR^n$}
  \end{align}
  for all $j=1,\dotsc,d$.
  For any $\tau>0$,
  \begin{align}
    \Pr\del{ \norm{ \sum_{j=1}^d \lambda_j \vv_j\vv_j^\T - \|\vlambda\|_1 \vI_n }_2 \geq \tau }
    & \leq 2 \cdot 9^n \cdot \exp\del{ -c \cdot \min\cbr{ \frac{\tau^2}{v^2 \|\vlambda\|_2^2} ,\, \frac{\tau}{v \|\vlambda\|_\infty} } } .
  \end{align}
  where $\|\vlambda\|_1 := \sum_{j=1}^d \lambda_j$,
  $\|\vlambda\|_2^2 := \sum_{j=1}^d \lambda_j^2$,
  and
  $\|\vlambda\|_\infty := \max_{j \in [d]} \lambda_j$.
\end{lemma}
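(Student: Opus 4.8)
The plan is to reduce the operator-norm deviation to a scalar deviation via an $\epsilon$-net, and then control the scalar deviation at each net point with a Bernstein-type bound for sums of independent sub-exponential random variables. First I would fix a $1/4$-net $\cN$ of $S^{n-1}$ with $|\cN| \leq 9^n$ (a standard volumetric bound). Writing $\vM := \sum_{j=1}^d \lambda_j \vv_j\vv_j^\T - \|\vlambda\|_1 \vI_n$, which is symmetric, \Cref{lemma:net} with $\epsilon = 1/4$ gives $\norm{\vM}_2 \leq 2\max_{\vu \in \cN} \abs{\vu^\T\vM\vu}$, so the event $\cbr{\norm{\vM}_2 \geq \tau}$ is contained in $\bigcup_{\vu\in\cN}\cbr{\abs{\vu^\T\vM\vu} \geq \tau/2}$. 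By the union bound it therefore suffices to prove that, for every fixed $\vu \in S^{n-1}$,
\begin{align}
  \Pr\del{ \abs[3]{ \vu^\T\vM\vu } \geq \tau/2 }
  & \leq 2\exp\del{ -c' \cdot \min\cbr[1]{ \frac{\tau^2}{v^2\|\vlambda\|_2^2} ,\, \frac{\tau}{v\|\vlambda\|_\infty} } }
\end{align}
for a universal constant $c' > 0$; multiplying by $9^n$ and adjusting the constant then yields the lemma, and replacing $\tau/2$ by $\tau$ costs only a constant factor.

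For the per-net-point estimate, note that $\vu^\T\vM\vu = \sum_{j=1}^d \xi_j$ where $\xi_j := \lambda_j\del{(\vu^\T\vv_j)^2 - 1}$ and the $\xi_j$ are independent. They are centered because $\bbE[(\vu^\T\vv_j)^2] = \vu^\T\bbE[\vv_j\vv_j^\T]\vu = \|\vu\|_2^2 = 1$, and $\sum_j\lambda_j = \|\vlambda\|_1$ is exactly the quantity subtracted in the definition of $\vM$. The key point is that $\vu^\T\vv_j$ is sub-Gaussian with parameter $v\|\vu\|_2^2 = v$ by hypothesis, and the square of a mean-zero sub-Gaussian random variable of parameter $v$ is sub-exponential with Orlicz scale $O(v)$; concretely, there are absolute constants $c_0, C_0 > 0$ with $\bbE\sbr{\exp(s\xi_j)} \leq \exp\del{C_0\lambda_j^2 v^2 s^2}$ whenever $\abs{s} \leq c_0/(\lambda_j v)$. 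Multiplying these moment generating function bounds over $j$, taking logarithms, and carrying out the standard two-regime Chernoff optimization over $s$ produces exactly the mixed sub-Gaussian/sub-exponential tail displayed above, with effective variance $\sum_j\lambda_j^2 v^2 = v^2\|\vlambda\|_2^2$ and effective scale $v\|\vlambda\|_\infty$.

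The main obstacle is this second step: establishing the scalar Bernstein bound with the correct dependence on $v$, $\|\vlambda\|_2$, and $\|\vlambda\|_\infty$, and in particular verifying that squaring a sub-Gaussian variable of parameter $v$ yields a sub-exponential variable whose scale is $O(v)$ \emph{uniformly over the distribution}, so that no continuity or symmetry assumption on the entries of $\vZ$ is used. The remaining ingredients --- the cardinality bound $|\cN| \leq 9^n$, the identity $\vu^\T\vv_j\vv_j^\T\vu = (\vu^\T\vv_j)^2$, the recentering $\sum_j\lambda_j\bbE[(\vu^\T\vv_j)^2] = \|\vlambda\|_1$, and the bookkeeping of constants through the choice $\epsilon = 1/4$ --- are routine.
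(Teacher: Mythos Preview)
Your proposal is correct and follows essentially the same approach as the paper: a $1/4$-net reduction via \Cref{lemma:net} (with the same $9^n$ cardinality bound) followed by a union bound over per-direction concentration for $\sum_j \lambda_j (\vu^\T\vv_j)^2 - \|\vlambda\|_1$. The only cosmetic difference is that the paper invokes the Hanson-Wright inequality of \citet{rudelson2013hanson} as a black box for this scalar step, whereas you unpack the diagonal case of Hanson-Wright directly as a Bernstein bound for independent sub-exponential summands; these are the same argument.
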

\begin{proof}
  Let $\cN$ be an $(1/4)$-net of $S^{n-1}$ with respect to the Euclidean metric.
  A standard volume argument of \citet{pisier1999volume} allows a choice of $\cN$ with $|\cN| \leq 9^n$.
  By Lemma~\ref{lemma:net}, we have for any $t>0$,
  \begin{align}
    \Pr\del{ \norm{ \sum_{j=1}^d \lambda_j \vv_j\vv_j^\T - \|\vlambda\|_1 \vI_n }_2 \geq \tau }
    & \leq \Pr\del{ \max_{\vu \in \cN} \abs[4]{ \sum_{j=1}^d \lambda_j (\vu^\T\vv_j)^2 - \|\vlambda\|_1 } \geq \tau/2 } .
  \end{align}
  Next, observe that for any $\vu \in S^{n-1}$, the random variables $\vu^\T \vv_1, \dotsc, \vu^\T \vv_d$ are independent random variables, each with mean-zero, unit variance, and sub-Gaussian with parameter $v$.
  By the Hanson-Wright inequality of~\cite{rudelson2013hanson} and a union bound, there exists a universal constant $c>0$ such that, for any unit vector $\vu \in S^{n-1}$ and any $\tau>0$,
  \begin{align}
    \Pr\del{ \max_{\vu \in \cN} \abs[4]{ \sum_{j=1}^d \lambda_j (\vu^\T\vv_j)^2 - \|\vlambda\|_1 }_2 \geq \tau/2 }
    & \leq 2 \cdot 9^n \cdot \exp\del{ -c \cdot \min\cbr{ \frac{\tau^2}{v^2 \|\vlambda\|_2^2} ,\, \frac{\tau}{v \|\vlambda\|_\infty} } } .
  \end{align}
  The claim follows.
\end{proof}

\begin{lemma}
  \label{lem:randunit}
  Let $\vtheta$ be a uniformly random unit vector in $S^{m-1}$.
  For any unit vector $\vu \in S^{m-1}$, the random variable $\vu^\T \vtheta$ is sub-Gaussian with parameter $v = O(1/m)$.
  Moreover, for any matrix $\vM \in \bbR^{m \times m}$, we have
  \begin{align}
    \bbE\sbr{ \|\vM\vtheta\|_2^4 }
    & \leq \frac{C}{m^2} \tr{\vM^\T\vM}^2
  \end{align}
  where $C>0$ is a universal constant.
\end{lemma}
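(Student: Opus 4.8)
The plan is to use the Gaussian representation of the uniform distribution on the sphere: if $\vg$ is a standard Gaussian vector in $\bbR^m$, then $\vtheta \stackrel{d}{=} \vg/\|\vg\|_2$, and moreover the direction $\vg/\|\vg\|_2$ is \emph{independent} of the norm $\|\vg\|_2$. This independence converts moments of $\vtheta$ into ratios of (known) Gaussian moments. Combined with rotational invariance of $\vtheta$, the first claim reduces to bounding the moment generating function of a single coordinate $\theta_1$, and the second claim reduces to the case where $\vM^\T\vM$ is diagonal.

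For the sub-Gaussian claim: by rotational invariance $\vu^\T\vtheta \stackrel{d}{=} \theta_1$, and $\theta_1$ is symmetric, so all odd moments vanish. Writing $g_1 = \|\vg\|_2 \cdot \theta_1$ with $\|\vg\|_2 \independent \theta_1$ gives, for every $k \ge 1$,
\begin{align}
  \bbE[\theta_1^{2k}] = \frac{\bbE[g_1^{2k}]}{\bbE[\|\vg\|_2^{2k}]} = \frac{(2k-1)!!}{m(m+2)\cdots(m+2k-2)} \le (2k-1)!!\,\del{\frac1m}^{k} .
\end{align}
Plugging these bounds into $\bbE[e^{\lambda\theta_1}] = \sum_{k\ge0}\lambda^{2k}\bbE[\theta_1^{2k}]/(2k)!$ and using $(2k-1)!! = (2k)!/(2^k k!)$ collapses the series to $\exp(\lambda^2/(2m))$, so $\vu^\T\vtheta$ is sub-Gaussian with parameter $1/m = O(1/m)$.

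For the fourth-moment claim: write $\|\vM\vtheta\|_2^2 = \vtheta^\T A \vtheta$ with $A := \vM^\T\vM \succeq \v0$; since $\bbE[(\vtheta^\T A \vtheta)^2]$ depends on $A$ only through its eigenvalues, we may assume $A = \diag{(a_1,\dotsc,a_m)}$ with $a_i \ge 0$, so $\bbE[\|\vM\vtheta\|_2^4] = \sum_{i,j}a_ia_j\,\bbE[\theta_i^2\theta_j^2]$. The same Gaussian trick gives $\bbE[\theta_i^4] = \bbE[g_i^4]/\bbE[\|\vg\|_2^4] = 3/(m(m+2))$ and, for $i \neq j$, $\bbE[\theta_i^2\theta_j^2] = \bbE[g_i^2g_j^2]/\bbE[\|\vg\|_2^4] = 1/(m(m+2))$. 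Summing (and using $\sum_{i\neq j}a_ia_j = \tr{A}^2 - \tr{A^2}$) yields the closed form $\bbE[\|\vM\vtheta\|_2^4] = \frac{1}{m(m+2)}\del{2\tr{A^2} + \tr{A}^2}$; since $A$ is PSD, $\tr{A^2} \le \tr{A}^2$, hence $\bbE[\|\vM\vtheta\|_2^4] \le \frac{3}{m(m+2)}\tr{\vM^\T\vM}^2 \le \frac{3}{m^2}\tr{\vM^\T\vM}^2$, giving $C = 3$.

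There is no serious obstacle here; the only steps needing care are (i) invoking the direction/norm independence of a standard Gaussian vector (the standard polar-coordinates factorization) together with the $\chi^2$ moment formula $\bbE[\|\vg\|_2^{2k}] = m(m+2)\cdots(m+2k-2)$, and (ii) the bookkeeping that turns the even-moment bounds into the exponential-moment bound via $(2k-1)!!/(2k)! = 1/(2^k k!)$.
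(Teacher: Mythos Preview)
Your proof is correct. Both you and the paper start from the same Gaussian representation $\vz = L\vtheta$ with $L = \|\vz\|_2$ independent of $\vtheta$, but you exploit it differently. The paper proceeds via Jensen's inequality: since $\bbE[L\mid\vtheta]=\mu:=\bbE[L]=\Omega(\sqrt{m})$ is a constant, convexity of $s\mapsto e^{ts}$ gives
\[
\bbE\bigl[e^{t\,\vu^\T\vtheta}\bigr]=\bbE\bigl[e^{(t/\mu)\,\vu^\T(\bbE[L\mid\vtheta]\,\vtheta)}\bigr]\le\bbE\bigl[e^{(t/\mu)\,\vu^\T\vz}\bigr]=e^{t^2/(2\mu^2)},
\]
and analogously $\mu^4\,\bbE[\|\vM\vtheta\|_2^4]\le\bbE[\|\vM\vz\|_2^4]=2\tr{(\vM^\T\vM)^2}+\tr{\vM^\T\vM}^2$. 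You instead use the independence to factor moments, $\bbE[\theta_1^{2k}]=\bbE[g_1^{2k}]/\bbE[L^{2k}]$, and then sum the Taylor series. Your route is a bit more computational but yields sharper explicit constants (sub-Gaussian parameter exactly $1/m$ rather than $1/\mu^2$, and the exact identity $\bbE[\|\vM\vtheta\|_2^4]=\tfrac{1}{m(m+2)}(2\tr{A^2}+\tr{A}^2)$); the paper's Jensen argument is shorter and avoids the series manipulation.
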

\begin{proof}
  Let $L$ be a $\chi$ random variable with $m$ degrees-of-freedom, independent of $\vtheta$, so the distribution of $\vz := L\vtheta$ is the standard Gaussian in $\bbR^m$.
  Let $\mu := \bbE[L] = \bbE[L \mid \vtheta] = \sqrt{2} \tfrac{\Gamma((m+1)/2)}{\Gamma(m/2)} = \Omega(\sqrt{m})$.
  By Jensen's inequality, for any $t \in \bbR$,
  \begin{align}
    \bbE\sbr{ \exp(t \vu^\T \vtheta) }
    & = \bbE\sbr{ \exp\del{ \del{\frac{t}\mu \vu}^\T \del{\bbE[ L \mid \vtheta ] \vtheta} } } \\
    & \leq \bbE\sbr{ \exp\del{ \del{\frac{t}{\mu} \vu}^\T \del{ L\vtheta } } } \\
    & = \bbE\sbr{ \exp\del{ \del{\frac{t}{\mu} \vu}^\T \vz } } \\
    & = \exp\del{ \frac{t^2}{2\mu^2} } .
  \end{align}
  It follows that $\vu^\T\vtheta$ is sub-Gaussian with parameter $v = 1/\mu^2 = O(1/m)$.

  Similarly, again by Jensen's inequality,
  \begin{align}
    \mu^4 \cdot \bbE\sbr{ \|\vM\vtheta\|_2^4 }
    & = \bbE\sbr{ \bbE[ L \mid \vtheta ]^4 \|\vM\vtheta\|_2^4 } \\
    & \leq \bbE\sbr{ L^4 \|\vM\vtheta\|_2^4 } \\
    & = \bbE\sbr{ \|\vM\vz\|_2^4 } .
  \end{align}
  Furthermore, a direct computation shows that
  \begin{align}
    \bbE\sbr{ \|\vM\vz\|_2^4 }
    & = 2 \tr{(\vM^\T\vM)^2} + \tr{\vM^\T\vM}^2 \\
    & \leq 3\tr{\vM^\T\vM}^2 .
  \end{align}
  The conclusion follows since $\mu^4 = \Omega(m^2)$.
\end{proof}

\end{document}